\newtheorem{thm}{Theorem}[section]
\newtheorem{corollary}[thm]{Corollary}
\newtheorem{proposition}[thm]{Proposition}
\newtheorem{example}[thm]{Example}
\DeclareMathOperator{\updim}{\overline{dim}}
\DeclareMathOperator{\lodim}{\underline{dim}}
\title{On the intermediate dimensions of concentric spheres and related sets}
\author{Justin T. Tan}
\date{}
\begin{document}

\maketitle

\begin{abstract}
    The intermediate dimensions are a family of dimensions introduced in 2019 by Falconer, Fraser, and Kempton to interpolate between the Hausdorff dimension and the box dimension. To date, there are limited examples of explicit calculations of the intermediate dimensions of interesting sets. We calculate the intermediate dimensions of sets of concentric spheres converging to the origin in Euclidean spaces. We also consider related sets including isolated points on concentric spheres and attenuated topologist's sine curves.
    
    \vspace{5mm}
    
    \noindent\emph{Mathematics Subject Classification} 2020: 28A80.

    \noindent\emph{Key words and phrases}: intermediate dimensions, box dimension, Hausdorff dimension, concentric spheres, topologist's sine curve.
\end{abstract}

\section{Introduction}

The recent introduction of spectra of dimensions has provided a new approach in fractal geometry, revealing interesting results and useful applications (see \cite{interpolating} for a survey on dimension interpolation). The first widely-studied example of dimension interpolation is the Assouad spectrum, introduced in 2016 by Fraser and Yu \cite{spectra} to interpolate between the upper box dimension and the Assouad dimension. The intermediate dimensions are another example of interpolation between well-known dimensions. Introduced in 2019 by Falconer, Fraser, and Kempton \cite{intermediate}, the intermediate dimensions take values between the Hausdorff dimension and box dimensions (also known as the Minkowski dimension). We refer the reader to \cite{fractal} for a detailed treatment of fractal geometry, including the Hausdorff and box dimensions.

For $\theta\in(0,1]$ and bounded $E\subseteq\mathbb{R}^d$, the upper and lower $\theta$-intermediate dimensions of $E$ are defined as
\begin{multline}\label{upintdim}
    \updim_\theta (E) = \inf\{s\geq0:\forall\varepsilon>0,\exists\delta_0>0,\forall0<\delta<\delta_0,\\
    \exists\text{ a cover }\mathcal{U}\text{ of }E\text{ such that }\delta\leq|U|\leq\delta^{\theta}(\forall U\in\mathcal{U})\text{ and }\sum_{U\in\mathcal{U}}|U|^s\leq\varepsilon\}
\end{multline}
and
\begin{multline}\label{lointdim}
    \lodim_\theta (E) = \inf\{s\geq0:\forall\varepsilon>0,\forall\delta_0>0,\exists0<\delta<\delta_0,\\
    \exists\text{ a cover }\mathcal{U}\text{ of }E\text{ such that }\delta\leq|U|\leq\delta^{\theta}(\forall U\in\mathcal{U})\text{ and }\sum_{U\in\mathcal{U}}|U|^s\leq\varepsilon\}
\end{multline}
respectively. We also define $\updim_0 = \lodim_0 = \dim_H$, the Hausdorff dimension. It is clear that $\updim_1 = \updim_B$ and $\lodim_1 = \lodim_B$, the upper and lower box dimensions respectively. (\ref{upintdim}) and (\ref{lointdim}) can be expressed with the interval of allowed diameters of covering sets being $[\delta^{1/\theta},\delta]$ instead, as was done in \cite{intermediate}, \cite{projection}, and others. The definition of the intermediate dimensions is natural as it allows the sets in the cover to have diameters in some range, thus interposing between the definition of the Hausdorff dimension, which only imposes an upper bound on the diameters of the covering sets, and the definition of the box dimension, which fixes the diameters of all covering sets.

In \cite{intermediate}, it was shown that $\updim_\theta (E)$ and $\lodim_\theta (E)$ are monotonically increasing with respect to $\theta\in[0,1]$ and continuous over $\theta\in(0,1]$. The Hausdorff dimension is countably stable but the $\theta$-intermediate dimensions are closure-invariant for $\theta\in(0,1]$, so continuity at $\theta=0$ is not guaranteed in general.

A key question raised in \cite{intermediate} is, when are the intermediate dimensions of a set continuous at zero? Results relating to this question include \cite[Proposition 4.1]{intermediate}, \cite[Corollaries 6.1, 6.2, 6.4]{projection}, and \cite[Corollaries 3.5, 3.6, 3.7]{brownian}. Intuitively, continuity at zero would imply that the complexity that causes the box dimension to exceed the Hausdorff dimension can be fully accounted for by allowing the covering sets to take diameters in $[\delta,\delta^{\theta}]$ and letting $\theta\to0^+$. Even in cases where the exact formula for the intermediate dimensions of a set may be difficult to calculate, we may be able to determine whether the intermediate dimensions of the set are continuous at zero (such as for the Bedford-McMullen carpets \cite[Proposition 4.1]{intermediate}). We take this approach in Proposition \ref{points}.

Fraser, Falconer, and Kempton \cite[Proposition 3.1]{intermediate} proved that the $\theta$-intermediate dimensions of the set $F_p:=\{n^{-p}:n\in\mathbb{N}\}$ for $p>0$ are $\updim_\theta(F_p)=\lodim_\theta(F_p)=\frac{\theta}{p+\theta}$ for all $\theta\in[0,1]$. In this case, we observe non-trivial interpolation between the Hausdorff and box dimensions, as well as continuity at zero for the intermediate dimensions. This set will be the starting point for the more complicated examples that we will work with. Recent work calculating the intermediate dimensions of other sets include sharper bounds for the Bedford-McMullen carpets \cite{bmcarpets} and exact values for elliptical polynomial spirals \cite{elliptical}.

Within this note, $|U|$ will denote the diameter of the set $U\subseteq\mathbb{R}^d$, $|x|$ will denote the Euclidean norm of the point $x\in\mathbb{R}^d$, and $B(x,r)$ will denote the closed ball of radius $r>0$ centered at $x\in\mathbb{R}^d$.

\section{Intermediate dimensions of sets of concentric spheres in $\mathbb{R}^d$}

For $p>0$, let
\begin{equation*}
    C_p^d:=\left\{x\in\mathbb{R}^d:|x|\in F_p\right\}.
\end{equation*}
That is, $C_p^d$ is the union of $(d-1)$-spheres in $\mathbb{R}^d$ with radii from $F_p=\left\{n^{-p}:n\in\mathbb{N}\right\}$. Therefore, we can write
\begin{equation*}
    C_p^d=\bigcup_{i\in\mathbb{N}}\mathcal{S}_{1/i^p}^{d-1},
\end{equation*}
where $\mathcal{S}_{r}^{d-1}=\left\{x\in\mathbb{R}^{d}:|x|=r\right\}$. We illustrate $C_1^2$ in Figure \ref{concentricgraph}. Since the case in $\mathbb{R}$ will just be two copies of $F_p$, we will only focus on $d\geq2$.

\begin{figure}[h]
\centering
\includegraphics[width=5cm]{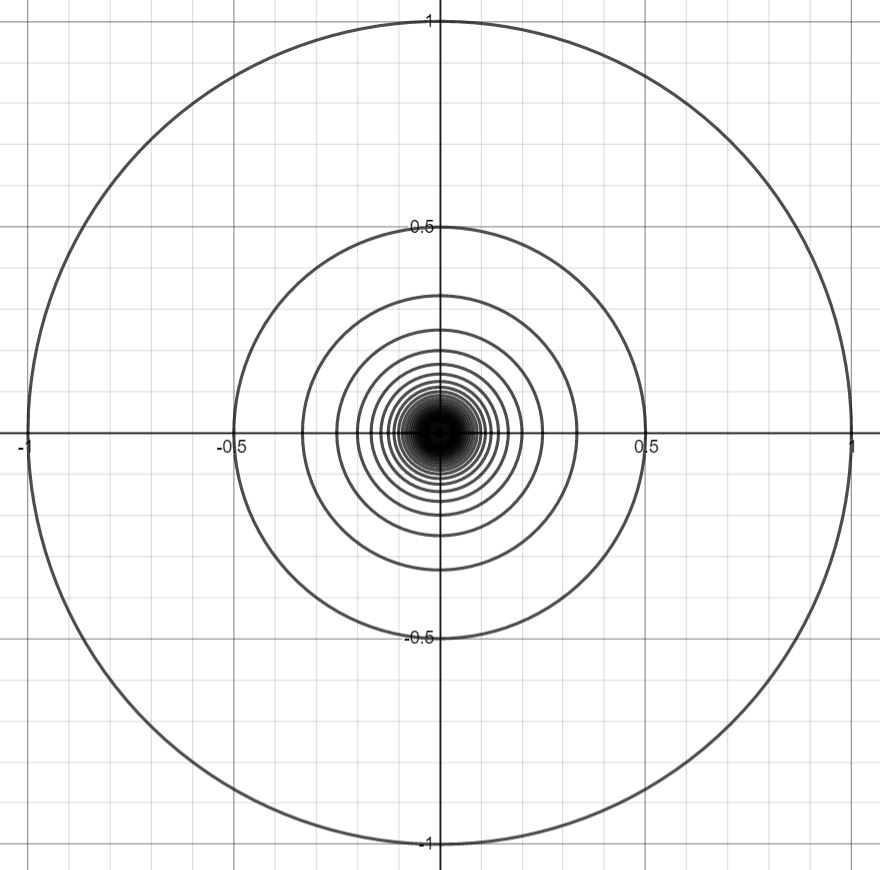}
\caption{The set of concentric circles $C_1^2\subseteq\mathbb{R}^2$.}
\label{concentricgraph}
\end{figure}

\begin{thm}\label{concentric1}
Let $d\geq2$. For $p>0$ and $0\leq\theta\leq1$,
\begin{equation}
    \lodim_\theta \left(C_p^d\right) = \updim_\theta \left(C_p^d\right) =
    \begin{cases}
        \frac{dp(d-1)+d\theta(1-p(d-1))}{dp+\theta(1-p(d-1))} &\text{if } 0<p<\frac{1}{d-1}\\
        d-1 &\text{if } p\geq\frac{1}{d-1}
    \end{cases}.
\end{equation}
\end{thm}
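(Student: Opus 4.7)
The formula splits into two regimes. When $p \geq 1/(d-1)$ the series $\sum_i r_i^{d-1}$ (with $r_i = i^{-p}$) converges, so $C_p^d$ has finite $(d-1)$-dimensional Hausdorff measure and hence $\dim_H(C_p^d) = d-1$. A volume-of-$\delta$-neighbourhood computation also gives $\dim_B(C_p^d) = d-1$. The chain $\dim_H \leq \lodim_\theta \leq \updim_\theta \leq \dim_B$ then forces $\lodim_\theta = \updim_\theta = d-1$ for all $\theta \in [0,1]$, handling this regime. The work is in the case $0 < p < 1/(d-1)$.

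For the upper bound in that regime, fix $\theta \in (0,1]$, small $\delta > 0$, and $s$ strictly larger than the claimed value $s^*(\theta)$. I will use a two-scale cover parameterised by a threshold $i^* = i^*(\delta)$: for each $i \leq i^*$ cover $\mathcal{S}^{d-1}_{r_i}$ by $\lesssim (r_i/\delta)^{d-1}$ balls of diameter $\delta$ using a standard sphere covering, and cover $\bigcup_{i > i^*} \mathcal{S}^{d-1}_{r_i} \subseteq B(0, r_{i^*+1})$ by $\lesssim (r_{i^*+1}/\delta^\theta)^d$ balls of diameter $\delta^\theta$. Using $\sum_{i=1}^{i^*} i^{-p(d-1)} \asymp (i^*)^{1-p(d-1)}$ (valid since $p(d-1) < 1$), the $s$-sum is bounded by
$$C(i^*)^{1-p(d-1)}\delta^{s-(d-1)} + C(i^*)^{-pd}\delta^{\theta(s-d)}.$$
Balancing the two terms gives $(i^*)^{p+1} \asymp \delta^{\theta(s-d)-s+(d-1)}$; a direct algebraic check then confirms both terms are $O(1)$ at $s = s^*$ and decay for $s > s^*$, proving $\updim_\theta(C_p^d) \leq s^*$.

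For the matching lower bound, fix $s < s^*$ and any admissible cover $\mathcal{U}$. By the $(d-1)$-dimensional Hausdorff content of a sphere,
$$\sum_{U \in \mathcal{U}_i} |U|^{d-1} \geq c\, r_i^{d-1}, \qquad \mathcal{U}_i := \{U \in \mathcal{U} : U \cap \mathcal{S}^{d-1}_{r_i} \neq \emptyset\}.$$
Summing over $i \leq I$ for a parameter $I = I(\delta)$ and introducing the multiplicity $n_I(U) := |\{i \leq I : U \in \mathcal{U}_i\}|$ gives $c \sum_{i=1}^{I} r_i^{d-1} \leq \sum_U n_I(U)|U|^{d-1}$. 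The gap estimate $r_i - r_{i+1} \asymp r_i^{1+1/p}$ for $(r_i)$ yields $n_I(U) \lesssim 1 + |U|\, r_I^{-(1+1/p)}$. Combining with the size bounds $|U|^{d-1} \leq \delta^{(d-1)-s}|U|^s$ and $|U|^d \leq \delta^{\theta(d-s)}|U|^s$ (both valid for $d-1 < s < d$ and $\delta \leq |U| \leq \delta^\theta$) leads to
$$\sum_{U \in \mathcal{U}} |U|^s \gtrsim \frac{I^{1-p(d-1)}}{\delta^{(d-1)-s} + I^{p+1}\delta^{\theta(d-s)}}.$$
Choosing $I$ to balance the two denominator terms and substituting $s = s^*$ gives a positive constant; for $s < s^*$ the right-hand side diverges as $\delta \to 0$, so $\lodim_\theta(C_p^d) \geq s^*$.

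The hard part is the lower bound, specifically the combinatorial estimate $n_I(U) \lesssim 1 + |U|\, r_I^{-(1+1/p)}$: one must verify that a ball of any diameter $|U| \in [\delta, \delta^\theta]$ intersects only a controlled number of spheres $\mathcal{S}^{d-1}_{r_i}$ with $i \leq I$, uniformly in the position of $U$, using the $F_p$ gap structure. Once that is in hand, the balance computation reproduces exactly the algebraic formula for $s^*$, and the optimal $I$ in the lower bound matches the optimal $i^*$ in the upper bound up to constants.
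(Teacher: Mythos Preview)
Your proposal is correct, and your upper bound (two-scale cover with threshold $i^*$, balancing the $\delta$-cover of the outer spheres against the $\delta^\theta$-cover of the inner ball) is exactly the paper's argument. For the lower bound in the regime $0<p<\tfrac{1}{d-1}$ the paper takes a slightly different route: it applies the mass distribution principle for intermediate dimensions, placing the measure $\mu_\delta=\delta^{s-(d-1)}\sum_{i=1}^{M}\mathcal{H}^{d-1}\!\restriction_{\mathcal{S}^{d-1}_{r_i}}$ on the first $M$ spheres (with $M$ chosen via the same balance that gives your $I$), checking that the total mass is bounded below, and verifying $\mu_\delta(U)\lesssim |U|^s$ using the very gap estimate you use to bound $n_I(U)$. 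Your direct content-and-multiplicity argument is the ``unpacked'' dual of this: summing the sphere content inequalities over $i\le I$ and controlling the overlap is essentially how one proves the mass distribution principle in this situation, so the two approaches share all the real ingredients and produce the identical balance equation for $s^*$. The paper's version is a bit cleaner to write down; yours is more elementary in that it avoids citing the mass distribution principle as a black box. One small slip to fix: at the borderline $p=\tfrac{1}{d-1}$ the series $\sum_i r_i^{d-1}=\sum_i i^{-1}$ diverges, so your finite-$\mathcal{H}^{d-1}$-measure justification for $\dim_H(C_p^d)=d-1$ fails there; the paper (and you should) simply invoke countable stability of Hausdorff dimension instead, which handles all $p$ at once.
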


\begin{proof}
Since the Hausdorff dimension is countably stable, $\lodim_0 \left(C_p^d\right) = \updim_0 \left(C_p^d\right) =d-1$. We now focus on $\theta\in(0,1]$.

\emph{Case 1: $p\geq\frac{1}{d-1}$.}
Due to monotonicity with respect to $\theta$, $\lodim_\theta\left(C_{p}^{d}\right)\geq d-1$.

For the upper bound, let $0<\delta<1$ and let $M=\left\lceil\delta^{-\frac{1}{1+p}}\right\rceil$. We can cover $\bigcup_{i=M}^{\infty} \mathcal{S}_{1/i^p}^{d-1} \subseteq B\left(0,\frac{1}{M^p}\right)$ with a grid of at most $\left(\frac{2\sqrt{d}}{M^p \delta}+1\right)^{d}$ $d$-cubes of side length $\delta/\sqrt{d}$ (and thus diameter $\delta$). By the binomial theorem, $\left(\frac{2\sqrt{d}}{M^p \delta}+1\right)^{d} = \sum_{k=0}^{d}\binom{d}{k}\left(\frac{2\sqrt{d}}{M^p \delta}\right)^{k}$.

There is a constant $\xi_d$ dependent only on $d$ such that for any $R>r>0$, we can cover $\mathcal{S}_{R}^{d-1}$ by at most $\xi_d\left(\frac{R}{r}\right)^{d-1}$ sets of diameter $r$. Now, for each $1\leq i\leq M-1$, $\frac{1}{i^p}\geq \frac{1}{(M-1)^p} \geq  \delta^{\frac{p}{1+p}}>\delta$, so we can cover $\mathcal{S}_{1/i^p}^{d-1}$ with at most $\xi_d \left(\frac{1}{i^p \delta}\right)^{d-1}$ sets of diameter $\delta$. Therefore, we can cover $\bigcup_{i=1}^{M-1} \mathcal{S}_{1/i^p}^{d-1}$ with at most $\sum_{i=1}^{M-1} \xi_d \left(\frac{1}{i^p \delta}\right)^{d-1}$ sets of diameter $\delta$. We can simplify this as:
\begin{align*}
    \sum_{i=1}^{M-1} \xi_d \left(\frac{1}{i^p \delta}\right)^{d-1} 
    = \frac{\xi_d}{\delta^{d-1}} + \frac{\xi_d}{\delta^{d-1}} \sum_{i=2}^{M-1} \left(\frac{1}{i^{p(d-1)} }\right)
    &\leq \frac{\xi_d}{\delta^{d-1}} + \frac{\xi_d}{\delta^{d-1}} \int_{x=1}^{M-1} \frac{1}{x^{p(d-1)} } dx\\
    &\leq \frac{\xi_d}{\delta^{d-1}} + \frac{\xi_d}{\delta^{d-1}} \int_{x=1}^{M-1} \frac{1}{x} dx\\
    &= \frac{\xi_d}{\delta^{d-1}} + \frac{\xi_d}{\delta^{d-1}} \log{(M-1)}.\\
\end{align*}

This gives us a $\delta$-cover of $C_p^d$, so for all $0<\delta<1$,
\begin{align*}
    N_{\delta}\left(C_p^d\right) &\leq \sum_{k=0}^{d}\binom{d}{k}\left(\frac{2\sqrt{d}}{M^p \delta}\right)^{k} + \frac{\xi_d}{\delta^{d-1}} + \frac{\xi_d}{\delta^{d-1}} \log{(M-1)}\\
    &\leq \sum_{k=0}^{d}\binom{d}{k}\left(\frac{2\sqrt{d}}{\delta^{-\frac{p}{1+p}} \delta}\right)^{k} + \frac{\xi_d}{\delta^{d-1}} + \frac{\xi_d}{\delta^{d-1}} \log{\left(\delta^{-\frac{1}{1+p}}\right)}\\
    &\leq \sum_{k=0}^{d}\binom{d}{k}\frac{\left(2\sqrt{d}\right)^{k}}{\delta^{\frac{k}{1+\frac{1}{d-1}}}} + \frac{\xi_d}{\delta^{d-1}} + \frac{\xi_d}{(1+p)\delta^{d-1}} \log{\left(\frac{1}{\delta}\right)}\\
    &= \sum_{k=0}^{d}\binom{d}{k}\frac{\left(2\sqrt{d}\right)^{k}}{\delta^{\frac{k}{d}(d-1)}} + \frac{\xi_d}{\delta^{d-1}} + \frac{\xi_d}{(1+p)\delta^{d-1}} \log{\left(\frac{1}{\delta}\right)},\\
\end{align*}
where $N_{\delta}\left(E\right)$ denotes the minimum number of $\delta$-sets required to cover $E\subseteq\mathbb{R}^d$.

We can now bound the upper box dimension of $C_p^d$ from above:
\begin{align*}
    \updim_B\left(C_{p}^{d}\right) &= \limsup_{\delta\to0^+}\frac{\log N_\delta(C_{p}^{d})}{-\log(\delta)}\\
    &\leq \limsup_{\delta\to0^+}\frac{\log \left( \sum_{k=0}^{d}\binom{d}{k}\frac{\left(2\sqrt{d}\right)^{k}}{\delta^{\frac{k}{d}(d-1)}} + \frac{\xi_d}{\delta^{d-1}} + \frac{\xi_d}{(1+p)\delta^{d-1}} \log{\left(\frac{1}{\delta}\right)}\right)}{\log\left(\frac{1}{\delta}\right)}\\
    &= \limsup_{x\to\infty}\frac{\log \left( \sum_{k=0}^{d}\binom{d}{k}\left(2\sqrt{d}\right)^{k}x^{\frac{k}{d}(d-1)} + \xi_d x^{d-1} + \frac{\xi_d}{(1+p)}x^{d-1} \log{\left(x\right)}\right)}{\log\left(x\right)}\\
    &= \limsup_{x\to\infty}\frac{\log \left( \sum_{k=0}^{d}\binom{d}{k}\left(2\sqrt{d}\right)^{k}x^{\left(\frac{k}{d}-1\right) (d-1)} + \xi_d  + \frac{\xi_d}{(1+p)} \log{\left(x\right)}\right)+\log \left(x^{d-1}\right)}{\log\left(x\right)}\\
    &= d-1,
\end{align*}
since $\frac{k}{d}-1\leq0$ within the summation in the first term.

With this, we conclude that for all $p\geq\frac{1}{d-1}$ and all $\theta\in[0,1]$,
\begin{equation}
    d-1\leq \dim_H\left(\mathcal{S}_{1}^{(d-1)}\right) \leq \lodim_\theta\left(C_{p}^{d}\right) \leq \updim_\theta\left(C_{p}^{d}\right) \leq \updim_B\left(C_{p}^{d}\right) \leq d-1.
\end{equation}

\emph{Case 2: $0<p<\frac{1}{d-1}$.}
We start by noting that for $d\in\mathbb{N}$ and $r>0$, $\mathcal{H}^{d-1} \left(\mathcal{S}_{r}^{d-1}\right)=\eta_{d-1}r^{d-1}$, where $\eta_{d-1}$ is a constant that depends only on $d-1$ (see, for example, Chapter 3 of \cite{fractal} for the constants).

Let $s=\frac{dp(d-1)+d\theta(1-p(d-1))}{dp+\theta(1-p(d-1))}$ and $\delta_0=2^{-\frac{1+p}{[1-(1-\theta)(d-s)][1-p(d-1)]}}$. For any given $0<\delta<\delta_0$, write $M=\left\lceil\delta^{-\frac{1-(1-\theta)(d-s)}{1+p}}\right\rceil$ and define
\begin{equation}
    \mu_\delta := \delta^{s-(d-1)} \sum_{i=1}^{M} \mathcal{H}^{d-1}\restriction_{\mathcal{S}_{1/i^p}^{d-1}}.
\end{equation}
Clearly, this is a Borel measure supported on $C_p^d$.

The total mass distributed over $C_p^d$ is:
\begin{align*}
    \mu_\delta\left(C_p^d\right) &= \delta^{s-(d-1)} \sum_{i=1}^{M} \mathcal{H}^{d-1}\restriction_{\mathcal{S}_{1/i^p}^{d-1}}\left(C_p^d\right)\\
    &= \delta^{s-(d-1)} \sum_{i=1}^{M} \mathcal{H}^{d-1}\left(\mathcal{S}_{1/i^p}^{d-1}\right)\\
    &= \delta^{s-(d-1)} \sum_{i=1}^{M} \eta_{d-1}\frac{1}{i^{p(d-1)}}\\
    &\geq \delta^{s-(d-1)} \eta_{d-1} \int_{x=1}^M \frac{1}{x^{p(d-1)}}dx\\
    &= \frac{\delta^{s-(d-1)} \eta_{d-1}}{1-p(d-1)} \left(M^{1-p(d-1)}-1\right)\\
    &\geq \frac{\delta^{s-(d-1)} \eta_{d-1}}{1-p(d-1)} \left(\frac{M^{1-p(d-1)}}{2}\right)\\
    &(\text{since }M^{1-p(d-1)}\geq \delta^{-\frac{1-(1-\theta)(d-s)}{1+p}(1-p(d-1))} > \delta_0^{-\frac{1-(1-\theta)(d-s)}{1+p}(1-p(d-1))} =2)\\
    &\geq \frac{\eta_{d-1}}{2(1-p(d-1))} \left(\delta^{-\frac{1-(1-\theta)(d-s)}{1+p}(1-p(d-1))}\delta^{s-(d-1)}\right)\\
    &= \frac{\eta_{d-1}}{2(1-p(d-1))} \delta^{\frac{s[dp+\theta(1-p(d-1))]-[dp(d-1)+d\theta(1-p(d-1))]}{1+p}}\\
    &= \frac{\eta_{d-1}}{2(1-p(d-1))}
\end{align*}
by our choice of $s$. This is a fixed amount independent of $0<\delta<\delta_0$.

Suppose $U\subseteq\mathbb{R}^d$ is such that $\delta\leq|U|\leq\delta^\theta$. For $1\leq i<j \leq M$, the distance between $\mathcal{S}_{1/i^p}^{d-1}$ and $\mathcal{S}_{\frac{1}{j^p}}^{d-1}$ is at least $\frac{p}{M^{1+p}}$ by the mean value theorem, so $U$ intersects at most $\frac{1}{p}M^{1+p}|U|+1$ of the spheres carrying mass. We observe that for each sphere $\mathcal{S}_{1/i^p}^{d-1}$ carrying mass that $U$ intersects, $\mu_\delta \left(U\cap\mathcal{S}_{1/i^p}^{d-1}\right)=\delta^{s-(d-1)} \mathcal{H}^{d-1}\left(U\cap\mathcal{S}_{1/i^p}^{d-1}\right)\leq \delta^{s-(d-1)}\eta_{d-1}|U|^{d-1}$.

Therefore, the mass carried by $U$ is:
\begin{align*}
    \mu_\delta\left(U\right) &\leq \left(\frac{1}{p}M^{1+p}|U|+1\right)\delta^{s-(d-1)}\eta_{d-1}|U|^{d-1}\\
    &\leq \frac{\eta_{d-1}}{p}\left(\delta^{-\frac{1-(1-\theta)(d-s)}{1+p}}+1\right)^{1+p}\delta^{s-(d-1)}|U|^{d-s}|U|^{s} + \eta_{d-1}\delta^{s-(d-1)}|U|^{d-s-1}|U|^{s}\\
    &\leq \frac{\eta_{d-1}}{p}\left(2\delta^{-\frac{1-(1-\theta)(d-s)}{1+p}}\right)^{1+p}\delta^{s-(d-1)}|U|^{d-s}|U|^{s} + \eta_{d-1}\delta^{s-(d-1)}|U|^{d-s-1}|U|^{s}\\
    &(\text{since }\delta^{-\frac{1-(1-\theta)(d-s)}{1+p}}\geq1)\\
    &\leq \frac{2^{1+p}\eta_{d-1}}{p}\delta^{-\theta(d-s)}\delta^{\theta(d-s)}|U|^{s} + \eta_{d-1}\delta^{s-(d-1)}\delta^{\theta(d-s-1)}|U|^{s}\\
    &= \frac{2^{1+p}\eta_{d-1}}{p}|U|^{s} + \eta_{d-1}\delta^{(1-\theta)\left(\frac{\theta(1-p(d-1))}{dp+\theta(1-p(d-1))}\right)}|U|^{s}\\
    &\leq \left( \frac{2^{1+p}\eta_{d-1}}{p} + \eta_{d-1}\right)|U|^{s}\\
    &(\text{since }(1-\theta)\geq0\text{ and } \frac{\theta(1-p(d-1))}{dp+\theta(1-p(d-1))}\geq0).\\
\end{align*}

Hence, by the mass distribution principle for the intermediate dimensions \cite[Proposition 2.2]{intermediate}, $\lodim_\theta\left(C_p^d\right)\geq s=\frac{dp(d-1)+d\theta(1-p(d-1))}{dp+\theta(1-p(d-1))}$.

For an upper bound, let $0<\delta<1$, $\frac{dp(d-1)+d\theta(1-p(d-1))}{dp+\theta(1-p(d-1))}<s<d$, and write $M=\left\lceil\delta^{-\frac{1-(1-\theta)(d-s)}{1+p}}\right\rceil$. We can cover $\bigcup_{i=M}^{\infty} \mathcal{S}_{1/i^p}^{d-1} \subseteq B_{\mathbb{R}^d}\left(0,\frac{1}{M^p}\right)$ with a grid of at most $\left(\frac{2\sqrt{d}}{M^p \delta^\theta}+1\right)^{d}= \sum_{k=0}^{d} \binom{d}{k}\left(\frac{2\sqrt{d}}{M^p \delta^\theta}\right)^k$ $d$-cubes of side length $\delta^\theta/\sqrt{d}$.

For each $1\leq i\leq M-1$, we have $\frac{1}{i^p}\geq \frac{1}{(M-1)^p} \geq \delta^{\frac{p}{1+p}(1-(1-\theta)(d-s))}>\delta$, so we can cover $\mathcal{S}_{1/i^p}^{d-1}$ with at most $\xi_d \left(\frac{1}{i^p \delta}\right)^{d-1}$ sets of diameter $\delta$ (where $\xi_d$ is a constant depending only on $d$). Thus, we can cover $\bigcup_{i=1}^{M-1} \mathcal{S}_{1/i^p}^{d-1}$ with at most $\sum_{i=1}^{M-1} \xi_d \left(\frac{1}{i^p \delta}\right)^{d-1}$ sets of diameter $\delta$. This can be simplified as:
\begin{align*}
    \sum_{i=1}^{M-1} \xi_d \left(\frac{1}{i^p \delta}\right)^{d-1} = \frac{\xi_d}{\delta^{d-1}} \sum_{i=1}^{M-1} \left(\frac{1}{i^{p(d-1)} }\right)
    &\leq \frac{\xi_d}{\delta^{d-1}} \int_{x=0}^{M-1} \frac{1}{x^{p(d-1)} } dx\\
    &= \frac{\xi_d}{1-p(d-1)} \frac{(M-1)^{1-p(d-1)}}{\delta^{d-1}}.
\end{align*}

This gives us a cover $\mathcal{U}$ of $C_p^d$ such that $\delta\leq|U|\leq\delta^\theta$ for all $U\in\mathcal{U}$. Summing over the sets in this cover, we get:
\begin{align*}
    \sum_{U\in\mathcal{U}} |U|^s &\leq \delta^{\theta s}\sum_{k=0}^{d} \binom{d}{k}\left(\frac{2\sqrt{d}}{M^p \delta^\theta}\right)^k + \delta^s \frac{\xi_d}{1-p(d-1)} \frac{(M-1)^{1-p(d-1)}}{\delta^{d-1}}\\
    &\leq \sum_{k=0}^{d} \left[ \binom{d}{k}\left(2\sqrt{d}\right)^k \delta^{\frac{1-(1-\theta)(d-s)}{1+p}pk} \delta^{\theta s - \theta k} \right] + \frac{\xi_d}{1-p(d-1)} \frac{\delta^{-\frac{1-(1-\theta)(d-s)}{1+p}(1-p(d-1))}\delta^s}{\delta^{d-1}}\\
    &= \sum_{k=0}^{d} \left[ \binom{d}{k}\left(2\sqrt{d}\right)^k \delta^{\frac{k(p-p(1-\theta)(d-s)- \theta  - \theta p) +\theta s + \theta p s }{1+p}} \right] \\
    &+ \frac{\xi_d}{1-p(d-1)} \delta^{\frac{p(d-1)-p(d-1)(1-\theta)(d-s)-1+(1-\theta)(d-s)+s+sp-d-dp+1+p}{1+p}}\\
    &= \sum_{k=0}^{d} \left[ \binom{d}{k}\left(2\sqrt{d}\right)^k \delta^{\frac{\frac{k}{d}(dp-dp(1-\theta)(d-s)-\theta d-\theta dp +\theta s + \theta ps) - \frac{k}{d}(\theta s + \theta ps) +\theta s + \theta p s }{1+p}} \right] \\
    &+ \frac{\xi_d}{1-p(d-1)} \delta^{\frac{-p(d-1)(1-\theta)(d-s)+(1-\theta)(d-s)+s+sp-d}{1+p}}\\
    &= \sum_{k=0}^{d} \left[ \binom{d}{k}\left(2\sqrt{d}\right)^k \delta^{\frac{\frac{k}{d}\left(s[dp+\theta(1-p(d-1))]-[dp(d-1)+d\theta(1-p(d-1))]\right) + \left(1- \frac{k}{d}\right)(\theta s + \theta ps)}{1+p}} \right] \\
    &+ \frac{\xi_d}{1-p(d-1)} \delta^{\frac{s[dp+\theta(1-p(d-1))]-[dp(d-1)+d\theta(1-p(d-1))]}{1+p}},\\
\end{align*}
which converges to $0$ as $\delta\to0^+$, since our assumption that $\frac{dp(d-1)+d\theta(1-p(d-1))}{dp+\theta(1-p(d-1))}<s<d$ and the fact that $0\leq\frac{k}{d}\leq1$ within the summation ensure that the exponents of $\delta$ are all positive. This tells us that for any given $\varepsilon>0$, we can find some $\delta_0>0$ such that for all $0<\delta<\delta_0$, there is a cover $\mathcal{U}$ of $C_p^d$ such that $\delta\leq|U|\leq\delta^\theta$ for all $U\in\mathcal{U}$ and $\sum_{U\in\mathcal{U}} |U|^s\leq\varepsilon$. Hence, $\updim_\theta\left(C_p^d\right)\leq s$ for all $\frac{dp(d-1)+d\theta(1-p(d-1))}{dp+\theta(1-p(d-1))}<s<d$, which implies that $\updim_\theta\left(C_p^d\right)\leq\frac{dp(d-1)+d\theta(1-p(d-1))}{dp+\theta(1-p(d-1))}$.

\end{proof}

Consider the polynomial spiral
\begin{equation}
    S_p:=\left\{\left(\frac{1}{t^p}\sin(\pi t),\frac{1}{t^p}\cos(\pi t)\right):t\geq1\right\}\subseteq\mathbb{R}^2,
\end{equation}
where $p>0$ is the polynomial winding rate. Figure \ref{spiral} illustrates $S_1$.

\begin{figure}
    \centering
    \begin{subfigure}[h]{7cm}
        \centering
        \includegraphics[width=5cm]{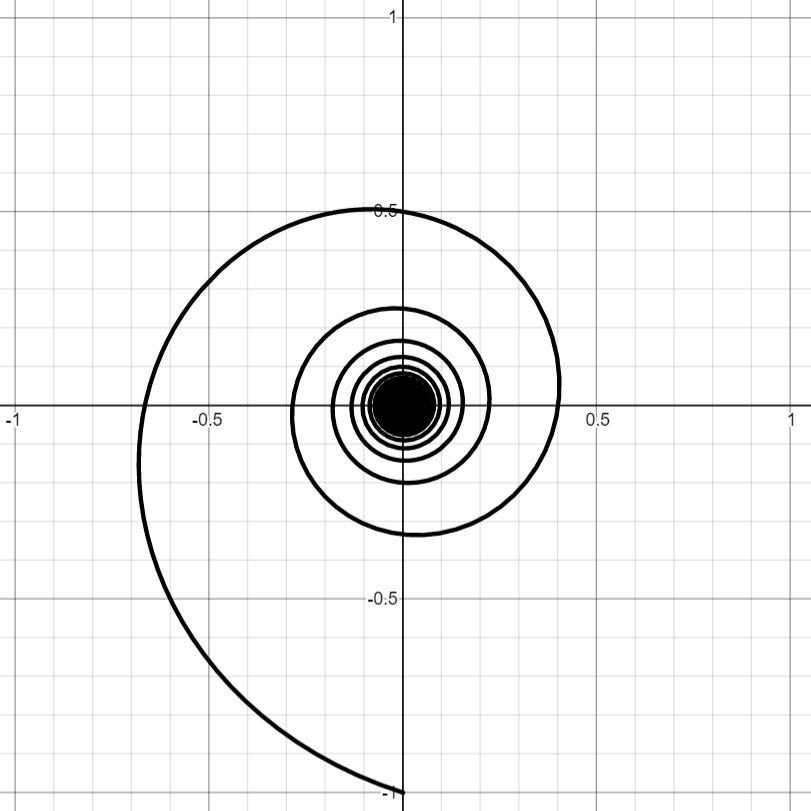} 
        \caption{The set $S_1\subseteq\mathbb{R}^2$.} \label{spiral}
    \end{subfigure}
    \begin{subfigure}[h]{7cm}
        \centering
        \includegraphics[width=5cm]{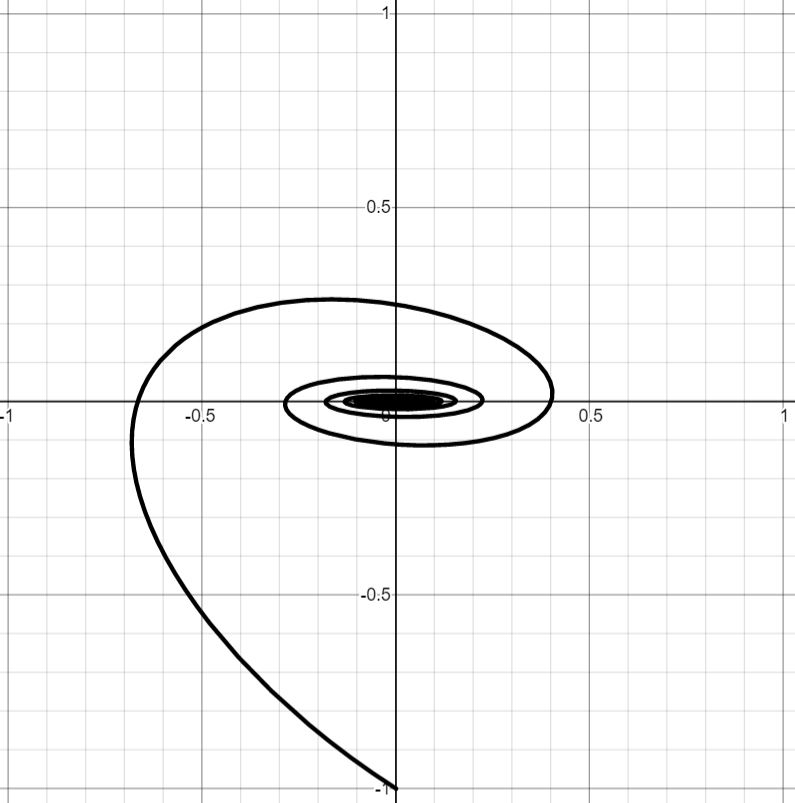}
        \caption{The set $S_{1,2}\subseteq\mathbb{R}^2$.} \label{spiral2}
    \end{subfigure}
    \caption{Illustrations of (\subref{spiral}) a polynomial spiral and (\subref{spiral2}) an elliptical polynomial spiral.}
\end{figure}

Since the $i$th circular arc of the spiral has length between $\frac{2\pi}{i^p}$ and $\frac{2\pi}{(i+1)^p}$ and moreover is contained in $\left\{x\in\mathbb{R}^d:\frac{1}{(i+1)^p}\leq|x|\leq\frac{1}{i^p}\right\}$, the proof of Theorem \ref{concentric1} can be modified easily to give
\begin{equation}\label{spiralformula}
    \lodim_\theta \left(S_p\right) = \updim_\theta \left(S_p\right) =
    \begin{cases}
        1+\frac{\theta(1-p)}{2p+\theta(1-p)} &\text{if } 0<p<1\\
        1 &\text{if } p\geq1
    \end{cases}
\end{equation}
for all $\theta\in[0,1]$. Thus, the intermediate dimensions of this relatively simple set displays non-trivial interpolation and continuity at zero.

The formula (\ref{spiralformula}) was also recently obtained independently by Burrell, Falconer, and Fraser \cite{elliptical} as a corollary to their calculation of the intermediate dimensions of elliptical polynomial spirals in $\mathbb{R}^2$ (for example, as shown in Figure \ref{spiral2}). The special case of $\theta=1$, which is simply the box dimension, has been calculated previously, for example, in 1991 by Vassilicos and Hunt \cite[Equation (A 22)]{kolmogorov}.

\section{Sets of concentric spheres generated by more general sequences}

It is natural to move on to sets of concentric spheres in $\mathbb{R}^d$ with radii from more general sequences in $\mathbb{R}$. For $(a_n)_{n\in\mathbb{N}}$ a decreasing sequence in $\mathbb{R}$ converging to $0$, write $\{a_n\}=\{a_n:n\in\mathbb{N}\}\subseteq\mathbb{R}$ and let
\begin{equation}
    C^d\left(\{a_n\}\right):=\left\{x\in\mathbb{R}^d:|x|\in \{a_n\}\right\}.
\end{equation}

This gives us a wide class of sets in $\mathbb{R}^d$ which are interesting to study from the perspective of the intermediate dimensions. We will approach this in two separate ways: first, by comparing $(a_n)_{n\in\mathbb{N}}$ to our benchmark $F_p$ sets; and second, by bounding $\dim_\theta\left(C^d\left(\{a_n\}\right)\right)$ in terms of $\dim_\theta\left(\{a_n\}\right)$ without any other information about the sequence itself.

The following facts can be observed simply by modifying the proof of Theorem \ref{concentric1}.

\begin{proposition}\label{concentriccor}
Let $(a_n)_{n\in\mathbb{N}}$ be a decreasing sequence in $\mathbb{R}$ converging to $0$, let $d\in\mathbb{N}$, and let $A_{p,n}$ be defined by
\begin{equation}\label{apndef}
    A_{p,n}\left((a_i)_{i\in\mathbb{N}}\right):=\#\left\{\left(\frac{1}{(k+1)^p},\frac{1}{k^p}\right]:1\leq k\leq n , \left(\frac{1}{(k+1)^p},\frac{1}{k^p}\right]\cap\left\{a_i:i\in\mathbb{N}\right\}\not=\emptyset\right\}.
\end{equation}

\begin{enumerate}
    \item For $p>0$, if $\limsup_{n\to\infty} \left(a_n n^p\right)<1$, then $\updim_\theta\left(C^d\left(\{a_n\}\right)\right) \leq \updim_\theta \left(C_p^d\right)$.
    \item For $p>0$, if $\liminf_{n\to\infty}\frac{A_{p,n}\left((a_i)_{i\in\mathbb{N}}\right)}{n}>0$, then $\lodim_\theta\left(C^d\left(\{a_n\}\right)\right) \geq \lodim_\theta \left(C_p^d\right)$.
\end{enumerate}

\end{proposition}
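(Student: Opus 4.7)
The strategy is to adapt the two halves of the proof of Theorem \ref{concentric1} to the sequence $(a_n)_{n\in\mathbb{N}}$, retaining the test exponents $s$ and scales $M=\left\lceil\delta^{-(1-(1-\theta)(d-s))/(1+p)}\right\rceil$ used there, and replacing only the steps that invoke the particular geometry of $F_p$. The case $\theta=0$ is handled by countable stability of the Hausdorff dimension (all sets in question have $\dim_H=d-1$), so I would focus on $\theta\in(0,1]$.

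For part (1), I would first extract from the hypothesis $\limsup_{n\to\infty}(a_nn^p)<1$ a constant $c\in(0,1)$ and an integer $N_0$ with $a_n\leq cn^{-p}$ for every $n\geq N_0$. This has two effects that let the upper-bound construction from Case 2 of Theorem \ref{concentric1} go through unchanged: the tail $\bigcup_{i\geq M}\mathcal{S}_{a_i}^{d-1}$ still lies in $B(0,M^{-p})$, so the same grid of $d$-cubes of side $\delta^\theta/\sqrt{d}$ covers it; and for interior indices $N_0\leq i<M$ the bound $\xi_d(a_i/\delta)^{d-1}\leq\xi_d(i^p\delta)^{-(d-1)}$ matches the one applied to $\mathcal{S}_{1/i^p}^{d-1}$ in Theorem \ref{concentric1}. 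The finitely many early indices $i<N_0$ contribute an $O(\delta^{-(d-1)})$ term that is absorbed into the existing error, so the sum $\sum_{U\in\mathcal{U}}|U|^s$ tends to $0$ for every $s>\updim_\theta(C_p^d)$, giving $\updim_\theta(C^d(\{a_n\}))\leq\updim_\theta(C_p^d)$. When $p\geq1/(d-1)$ the same substitution into the Case 1 cover yields the corresponding conclusion.

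For part (2), I would extract $c_0>0$ and an integer $N_1$ with $A_{p,n}\geq c_0n$ for all $n\geq N_1$, and for each $k$ with $I_k:=(1/(k+1)^p,1/k^p]$ meeting $\{a_n\}$ I would fix a single index $i_k$ with $a_{i_k}\in I_k$. The candidate measure becomes
\[
\mu'_\delta:=\delta^{s-(d-1)}\sum_{\substack{1\leq k\leq M\\I_k\text{ hit}}}\mathcal{H}^{d-1}\restriction_{\mathcal{S}_{a_{i_k}}^{d-1}},
\]
a Borel measure supported on $C^d(\{a_n\})$. Since at least $c_0M$ intervals with $k\leq M$ are hit and each contributes at least $\eta_{d-1}(k+1)^{-p(d-1)}$, even in the worst case that the hits cluster near $k=M$ the total mass is bounded below by a positive constant multiple of $M^{1-p(d-1)}\delta^{s-(d-1)}$. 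By the same algebraic identity relating $M$, $s$, and $\delta$ as in Theorem \ref{concentric1}, this stays bounded away from $0$ uniformly in $\delta\in(0,\delta_0)$ for a suitably chosen $\delta_0$ and any $s<\lodim_\theta(C_p^d)$.

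The step I expect to require the most care is the mass-to-cover bound $\mu'_\delta(U)\leq C|U|^s$ for $\delta\leq|U|\leq\delta^\theta$. In Theorem \ref{concentric1} this relied on the mean value spacing $p/M^{1+p}$ between consecutive spheres of $C_p^d$, and that spacing is no longer available in general, since two picked spheres coming from neighbouring intervals $I_k$ and $I_{k+1}$ can have arbitrarily close radii. My proposed fix is to use disjointness of the intervals $I_k$ themselves: for $k\leq M$ each $I_k$ has radial length at least $p/(M+1)^{p+1}$, so the radial range of $U$, an interval of length at most $|U|$, meets at most $|U|(M+1)^{p+1}/p+1$ of them, which bounds the number of picked spheres that $U$ can intersect. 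Substituting this count in place of $M^{1+p}|U|/p+1$ and repeating the algebra of Theorem \ref{concentric1} verbatim gives the desired inequality with a constant independent of $\delta$ and $U$; the mass distribution principle \cite[Proposition 2.2]{intermediate} then delivers $\lodim_\theta(C^d(\{a_n\}))\geq s$ for every $s<\lodim_\theta(C_p^d)$, completing the proof.
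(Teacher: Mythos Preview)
Your proposal is correct and follows essentially the same approach as the paper's own (very brief) proof: modify the efficient cover for part (1) after discarding finitely many large spheres, and for part (2) place the scaled $(d-1)$-Hausdorff measure on one chosen sphere per occupied gap $I_k$ and invoke the mass distribution principle. In fact you supply more detail than the paper does: your observation that the disjoint intervals $I_k$ (rather than the picked spheres themselves) each have radial length at least $p/(M+1)^{p+1}$, so that $U$ can meet only $O(|U|M^{1+p})$ of them, is precisely the point the paper suppresses when it says ``the mass distribution principle will give the result''.
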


\begin{proof}
We will not go through the full calculations again, but instead state the modifications to the proof of Theorem \ref{concentric1}.

\begin{enumerate}
    \item If $\limsup_{n\to\infty} \left(a_n n^p\right)<1$, then for all large enough $n\in\mathbb{N}$, the $n$th sphere of $C^d\left(\{a_n\}\right)$ is a scaled-down version (shrunk towards the origin) of the $n$th sphere of $C_p^d$. We can temporarily remove the finite number of spheres of $C^d\left(\{a_n\}\right)$ which are strictly larger than the corresponding spheres of $C_p^d$ due to finite stability of the upper intermediate dimensions, and then modify the efficient cover constructed for $C_p^d$ in the proof of Theorem \ref{concentric1} to become a permissible cover for $C^d\left(\{a_n\}\right)$ without increasing the sum $\sum_{U\in\mathcal{U}} |U|^s$.
    \item $\liminf_{n\to\infty}\frac{A_{p,n}\left((a_i)_{i\in\mathbb{N}}\right)}{n}=\alpha>0$ implies that for all large enough $N\in\mathbb{N}$, at least $\frac{\alpha}{2}$ of the gaps between consecutive spheres (the first N of them) contains a sphere of $C^d\left(\{a_n\}\right)$. We can apply the same $(d-1)$-dimensional Hausdorff measure on one (arbitrarily chosen) sphere in each such gap, scaled by the factor $\delta^{s-(d-1)}$, and the mass distribution principle \cite[Proposition 2.2]{intermediate} will give the result.
\end{enumerate}

For $d=1$, the one should instead modify the proof of Proposition 3.1 in \cite{intermediate}, which calculated the intermediate dimensions of $F_p$.
\end{proof}

Proposition \ref{concentriccor} tells us that the intermediate dimensions of $C^d\left(\left\{\frac{1}{q^n}\right\}\right)$ (given some $q\geq1$) will be $d-1$ for all $\theta\in[0,1]$, whereas the intermediate dimensions of $C^d\left(\left\{\frac{1}{\log n}\right\}\right)$ will be $d$ for $\theta\in(0,1]$ and $d-1$ for $\theta=0$ (in particular, discontinuous at zero).

Given some $(a_n)_{n\in\mathbb{N}}$ a decreasing sequence in $\mathbb{R}$ converging to $0$, it is not straightforward to derive the intermediate dimensions of $C^d\left(\{a_n\}\right)$ using only intermediate dimensions of $\{a_n\}$. We can, however, provide some loose bounds:
\begin{equation}\label{pointsbounds}
    \min\{d-1,d\updim_\theta\left(\{a_n\}\right)\} \leq \updim_\theta\left(C_d\left(\{a_n\}\right)\right) \leq d-1+\updim_\theta\left(\{a_n\}\right) \leq d,
\end{equation}
and similarly for the lower intermediate dimensions.
We prove the upper bound first (Proposition \ref{upbound}), followed by the lower bound (Theorem \ref{lobound}).

\begin{proposition}\label{upbound}
Let $(a_n)_{n\in\mathbb{N}}$ be a decreasing sequence in $\mathbb{R}$ converging to $0$. Then for all $d\in\mathbb{N}$ and any $\varepsilon>0$, we have
\begin{equation}
    \updim_\theta\left(C^d\left(\{a_n\}\right)\right) \leq d-1 + \updim_\theta \left(\{a_n\}\right),
\end{equation}
and
\begin{equation}
    \lodim_\theta\left(C^d\left(\{a_n\}\right)\right) \leq d-1 + \lodim_\theta \left(\{a_n\}\right).
\end{equation}
\end{proposition}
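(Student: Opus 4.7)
The plan is to lift an efficient intermediate-dimensions cover of the one-dimensional set $\{a_n\}\subseteq\mathbb{R}$ to a cover of $C^d(\{a_n\})\subseteq\mathbb{R}^d$: every interval $U\subseteq\mathbb{R}$ in the cover of $\{a_n\}$ is replaced by a covering of the spherical shell formed by the spheres $\mathcal{S}_{a_n}^{d-1}$ with $a_n\in U$, using the same $\xi_d(R/r)^{d-1}$ sphere-covering estimate that appeared in the proof of Theorem~\ref{concentric1}. The key structural feature is that the lifted cover uses sets of the \emph{same} diameter as the original cover, so the constraint $\delta\leq|V|\leq\delta^\theta$ is preserved automatically, and the upper and lower intermediate dimension inequalities can be handled by a single construction.

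To carry this out, I would first fix $t>\updim_\theta(\{a_n\})$ and $\varepsilon>0$, and for every sufficiently small $\delta>0$ invoke the definition of $\updim_\theta$ to obtain a cover $\mathcal{U}$ of $\{a_n\}$ consisting of intervals with $\delta\leq|U|\leq\delta^\theta$ and $\sum_{U\in\mathcal{U}}|U|^t\leq\varepsilon$. Next, for each $U\in\mathcal{U}$ with $U\cap\{a_n\}\neq\emptyset$, I would set $R_U:=\sup U$ and observe that the spheres $\mathcal{S}_{a_n}^{d-1}$ with $a_n\in U$ all lie in the closed annulus $\{x\in\mathbb{R}^d:\max(0,R_U-|U|)\leq|x|\leq R_U\}$, which can be covered by at most $\xi_d(1+R_U/|U|)^{d-1}$ sets of diameter $|U|$. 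Assembling these local covers across $U\in\mathcal{U}$ produces a cover $\mathcal{V}$ of $C^d(\{a_n\})$ with $\delta\leq|V|\leq\delta^\theta$, and, setting $s:=t+(d-1)$ and using the uniform bound $R_U\leq a_1$, the total cost satisfies an estimate of the form
\begin{equation*}
    \sum_{V\in\mathcal{V}}|V|^s \leq \xi_d'\,(1+a_1)^{d-1}\sum_{U\in\mathcal{U}}|U|^t \leq \xi_d'\,(1+a_1)^{d-1}\varepsilon,
\end{equation*}
which is arbitrarily small. This gives $\updim_\theta(C^d(\{a_n\}))\leq s=t+(d-1)$, and letting $t\downarrow\updim_\theta(\{a_n\})$ yields the desired inequality. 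The lower intermediate dimension version follows from the same construction, merely reversing the quantifier order on $\delta$, since the lift is performed pointwise in $\delta$.

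The one place where care is required is ensuring that the annular covering uses sets of diameter exactly $|U|$ rather than a constant multiple of $|U|$: replacing diameters by, say, $\sqrt{d}|U|$ at some intermediate step would jeopardize the upper bound $|V|\leq\delta^\theta$ when $\theta<1$, since $\sqrt{d}\,\delta^\theta$ need not lie in $[\delta,\delta^\theta]$. Writing the annular cover explicitly as a union of closed balls of radius $|U|/2$ centered on the sphere of radius $\max(0,R_U-|U|/2)$ cleanly keeps diameters equal to $|U|$ while achieving the $\xi_d(R_U/|U|)^{d-1}$ cardinality bound, and all remaining geometric constants absorb into $\xi_d'$.
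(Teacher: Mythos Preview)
Your approach is correct but takes a genuinely different route from the paper. The paper argues indirectly: it shifts all radii by $+1$ to form $\widetilde{C^d}(\{a_n\}) = \{x:|x|-1\in\{a_n\}\}$, observes that each orthant of this set is bi-Lipschitz equivalent to the Cartesian product $\{a_n\}\times[0,1]^{d-1}$, invokes the product formula for intermediate dimensions \cite[Proposition 2.5]{intermediate} to get the exact value $d-1+\dim_\theta(\{a_n\})$ there, and then applies the non-expansive radial map $T(x)=\tfrac{|x|-1}{|x|}x$ to push the bound down to $C^d(\{a_n\})$. Your direct lifting of covers from $\mathbb{R}$ to $\mathbb{R}^d$ avoids the product formula and the bi-Lipschitz bookkeeping entirely; it is more elementary and self-contained, at the cost of redoing by hand a computation that the product formula encapsulates. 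The paper's route has the advantage that it actually \emph{computes} $\dim_\theta\bigl(\widetilde{C^d}(\{a_n\})\bigr)$ exactly along the way, which is of some independent interest.

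One technical point: the explicit construction in your final paragraph does not quite work. A \emph{finite} collection of closed balls of radius $|U|/2$ centred on $\mathcal{S}_{R_U-|U|/2}^{d-1}$ cannot cover the outer boundary $\mathcal{S}_{R_U}^{d-1}$ of the annulus: a point $x$ with $|x|=R_U$ lies at distance exactly $|U|/2$ from its radial projection onto the middle sphere, so unless some centre happens to coincide with that projection, the distance from $x$ to every centre strictly exceeds $|U|/2$. The fix is standard and keeps the diameters equal to $|U|$: take a maximal $|U|/2$-separated subset $\{x_i\}$ of the annulus itself. Then the balls $B(x_i,|U|/2)$ have diameter $|U|$ and cover the annulus by maximality, while disjointness of the smaller balls $B(x_i,|U|/4)$ inside the $|U|/4$-thickening of the annulus gives the cardinality bound $\lesssim (1+R_U/|U|)^{d-1}$ by a volume comparison. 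With this correction your argument goes through as written.
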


\begin{proof}
Consider $\widetilde{C^d}\left(\{a_n\}\right) := \left\{x\in\mathbb{R}^d:|x|-1\in \{a_n\}\right\}$. That is, the set of concentric spheres in $\mathbb{R}^d$ with radii coming from $\{a_n\}+1=\{a_n+1:n\in\mathbb{N}\}$, resulting in the set of limit points being the unit $(d-1)$-sphere in $\mathbb{R}^d$. We can break $\widetilde{C^d}\left(\{a_n\}\right)$ into $2^d$ pieces based on quadrants, and each piece will be bi-Lipschitz equivalent to $\{a_n\}\times[0,1]\times...\times[0,1]\subseteq\mathbb{R}^d$, and therefore have the same intermediate dimensions \cite[Lemma 3.1]{interpolating}.

The product formula for the intermediate dimensions \cite[Proposition 2.5]{intermediate} gives
\begin{equation*}
    \lodim_\theta(E)+\lodim_\theta(F) \leq \lodim_\theta(E\times F) \leq \lodim_\theta(E)+\updim_B(F)
\end{equation*}
and
\begin{equation*}
    \updim_\theta(E)+\lodim_\theta(F) \leq \updim_\theta(E\times F) \leq \updim_\theta(E)+\updim_B(F).
\end{equation*}
Therefore, $\lodim_\theta\left(\{a_n\}\times[0,1]\times...\times[0,1]\right) = \lodim_\theta \left(\{a_n\}\right) + d-1$, which then implies that
\begin{equation}\label{withgap}
    \lodim_\theta\left(\widetilde{C^d}\left(\{a_n\}\right)\right) = d-1 + \lodim_\theta \left(\{a_n\}\right),
\end{equation}
and similarly for the upper intermediate dimensions. Even though the lower intermediate dimensions are not countably stable in general, (\ref{withgap}) holds because $\widetilde{C^d}\left(\{a_n\}\right)$ was partitioned into a finite number of congruent pieces.

The mapping $T:\{x\in\mathbb{R}^d:|x|\geq1\}\to\mathbb{R}^d$ given by $T(x)=\frac{|x|-1}{|x|}x$ is non-expansive, and therefore Lipschitz. Since the intermediate dimensions do not increase under Lipschitz mappings \cite[Theorem 3.1]{brownian}, the intermediate dimensions of $C^d\left(\{a_n\}\right)$ are bounded above by the intermediate dimensions of $\widetilde{C^d}\left(\{a_n\}\right)$.
\end{proof}

\begin{thm}\label{lobound}
Let $(a_n)_{n\in\mathbb{N}}$ be a decreasing sequence in $\mathbb{R}$ converging to $0$. Then for $d\geq2$ and $\theta\in(0,1]$, we have
\begin{equation}
    \lodim_\theta\left(C^d\left(\{a_n\}\right)\right) \geq d \lodim_\theta\left(\{a_n\}\right)
\end{equation}
and
\begin{equation}
    \updim_\theta\left(C^d\left(\{a_n\}\right)\right) \geq d \updim_\theta\left(\{a_n\}\right).
\end{equation}
\end{thm}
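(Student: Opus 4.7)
The plan is to adapt the mass distribution argument used in Theorem \ref{concentric1}, Case 2. Fix $t < \lodim_\theta(\{a_n\})$; it will suffice to show $\lodim_\theta(C^d(\{a_n\})) \geq dt$ and then let $t \uparrow \lodim_\theta(\{a_n\})$. The upper intermediate dimension case is entirely analogous, since the construction below is carried out per scale $\delta$, so applying it to the sequence of $\delta$'s witnessing $\updim_\theta(\{a_n\}) > t$ yields $\updim_\theta(C^d(\{a_n\})) \geq dt$. The goal is to produce, for each sufficiently small $\delta > 0$, a Borel measure $\mu_\delta$ supported on $C^d(\{a_n\})$ with total mass bounded below by a constant independent of $\delta$ and $\mu_\delta(U) \leq C|U|^{dt}$ for every $U \subseteq \mathbb{R}^d$ with $|U| \in [\delta, \delta^\theta]$. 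The mass distribution principle \cite[Proposition 2.2]{intermediate} then completes the argument.

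Using the hypothesis on $\{a_n\}$, for each small $\delta$ there is a Borel probability measure $\nu_\delta$ supported on $\{a_n\}$ satisfying the Frostman-type bound $\nu_\delta(V) \leq C|V|^t$ for all intervals $V$ with $|V| \in [\delta, \delta^\theta]$ (obtainable by a linear-programming duality argument applied to the definition, or by a Frostman-type lemma for intermediate dimensions). Lift $\nu_\delta$ to $C^d(\{a_n\})$ by setting
\[
\mu_\delta = \sum_n w_n \, \mathcal{H}^{d-1}\restriction_{\mathcal{S}^{d-1}_{a_n}}, \qquad w_n = \nu_\delta(\{a_n\}) \cdot a_n^{(d-1)(t-1)},
\]
with the sum truncated to indices $n$ for which $a_n$ lies in a suitable range. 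For $U$ with $|U| \in [\delta, \delta^\theta]$ whose radial projection $\pi(U) := \{|x| : x \in U\}$ satisfies $\inf \pi(U) \geq |U|$, each intersected sphere contributes surface area at most $C|U|^{d-1}$, and since the exponent $(d-1)(t-1)$ is non-positive, we have $a_n^{(d-1)(t-1)} \leq |U|^{(d-1)(t-1)}$ for every $a_n \in \pi(U)$. Combining with $\nu_\delta(\pi(U)) \leq C|U|^t$ yields $\mu_\delta(U) \leq C'|U|^{d-1} \cdot |U|^{(d-1)(t-1)+t} = C'|U|^{dt}$.

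The main obstacle is handling sets $U$ whose radial projection approaches the origin, where the weight $a_n^{(d-1)(t-1)}$ blows up. This will require truncating $\mu_\delta$ to spheres with $a_n \geq \rho_\delta$ for a threshold analogous to the cutoff index $M$ used in Theorem \ref{concentric1}, together with a separate argument bounding $\mu_\delta(U)$ for sets close to the origin. The truncation must be balanced so that the total mass, proportional to $\sum_{a_n \geq \rho_\delta} \nu_\delta(\{a_n\}) \, a_n^{(d-1)t}$, remains bounded below as $\delta \to 0^+$: if too few small spheres are included, we lose the Frostman bound on $\nu_\delta$, which relies on the clustering of the $a_n$ near $0$, while too many destroy the local estimate on $\mu_\delta(U)$. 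Balancing these competing demands, along with the bookkeeping analogous to that in Case 2 of Theorem \ref{concentric1}, is the technical heart of the proof.
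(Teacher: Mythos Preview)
Your overall strategy --- obtain a Frostman-type measure $\nu_\delta$ on $\{a_n\}$, lift it to the spheres, truncate near the origin, and apply the mass distribution principle --- is exactly the paper's. The difficulty you defer as ``the technical heart'' is, however, not one that can be resolved with your choice of weight. With $w_n = \nu_\delta(\{a_n\})\,a_n^{(d-1)(t-1)}$ the total mass is
\[
\mu_\delta\bigl(C^d(\{a_n\})\bigr)\ =\ \eta_{d-1}\sum_n \nu_\delta(\{a_n\})\,a_n^{(d-1)t}\ =\ \eta_{d-1}\int a^{(d-1)t}\,d\nu_\delta(a),
\]
and nothing in the Frostman bound prevents $\nu_\delta$ from concentrating on radii of order $\rho_\delta$; in that case the integral is $\asymp\rho_\delta^{(d-1)t}$. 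Since $\{a_n\}\cap[\epsilon,1]$ is finite for every $\epsilon>0$, retaining a fixed proportion of the $\nu_\delta$-mass forces $\rho_\delta\to 0$, so the total mass is not bounded below and the mass distribution principle cannot be invoked. No balancing of $\rho_\delta$ rescues this with your weight.

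The paper's fix is to drop the extra factor $a_n^{(d-1)t}$ and lift via the \emph{normalised} spherical measure, i.e.\ take $w_n\propto\nu_\delta(\{a_n\})\,a_n^{-(d-1)}$, so that each sphere carries exactly its $\nu_\delta$-mass and the total mass after truncating at $\rho_\delta=\bigl\lfloor\tfrac{1}{2c\delta^{t}}\bigr\rfloor\delta\sim\delta^{1-t}$ is automatically $\geq\tfrac12$. The factor $a_n^{-(d-1)}$ now appears in the local estimate instead, and here the truncation does the work: from $a_n\geq\rho_\delta$ one obtains (in the convention $|U|\in[\delta^{1/\theta},\delta]$)
\[
\lambda_\delta(U)\ \leq\ C\,|U|^{d-1}\,\rho_\delta^{-(d-1)}\,\nu_\delta(\pi(U))\ \leq\ C'\,|U|^{d-1+t}\,\delta^{-(1-t)(d-1)},
\]
and $|U|\leq\delta$ gives $|U|^{(d-1)(1-t)}\leq\delta^{(d-1)(1-t)}$, which cancels exactly to yield $\lambda_\delta(U)\leq C'|U|^{dt}$. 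Thus the same threshold $\rho_\delta$ that preserves half the mass is precisely what makes the local bound close up; the local estimate uses $a_n\geq\rho_\delta$ rather than $a_n\geq|U|$, and your condition $\inf\pi(U)\geq|U|$ is never needed.
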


\begin{proof}
Let $0<s<\lodim_\theta\left(\{a_n\}\right)$ (if $\lodim_\theta\left(\{a_n\}\right)=0$ then the result is trivial). Assume that $\{a_n\}\subseteq[0,1]$, otherwise just scale it down, noting that the intermediate dimensions are scaling-invariant. Since $\overline{\{a_n\}}=\{a_n\}\cup\{0\}$ is a compact set, so we can apply the Frostman-type lemma for the intermediate dimensions \cite[Proposition 2.3]{intermediate} to obtain a constant $c>0$ such that for all $\delta\in(0,1)$, there is a Borel probability measure $\mu_\delta$ supported on $\overline{\{a_n\}}\subseteq[0,1]$ such that for all $x\in\mathbb{R}$ and $\delta^{\frac{1}{\theta}}\leq r\leq\delta$, $\mu_\delta(B(x,r))\leq cr^s$.

Set $\delta_0=\min\{\frac{1}{2},(2c)^{\frac{1}{1-s}},\left(\frac{1}{4c}\right)^{\frac{1}{s}}\}$. Then for all $0<\delta<\delta_0$, modify the measures from the Frostman-type lemma by $\widetilde{\mu_\delta}=\mu_\delta \restriction_{\left(\left\lfloor\frac{1}{2c\delta^{s}}\right\rfloor\delta,1\right]}$. Using our upper bound on $\delta$, we have $\frac{1}{2c\delta^s}>\frac{1}{2c\delta_0^s}\geq\frac{4c}{2c}=2$, so the origin does not carry any mass, and also $\left\lfloor\frac{1}{2c\delta^{s}}\right\rfloor \geq \frac{1}{4c\delta^{s}}$. Since $\delta<(2c)^{\frac{1}{1-s}}$,
\begin{equation*}
    \left\lfloor\frac{1}{2c\delta^{s}}\right\rfloor\delta \leq \frac{1}{2c\delta^{s}}\delta = \frac{1}{2c}\delta^{1-s} < 1.
\end{equation*}
We also have
\begin{equation*}
    \mu_\delta\left(\left[0,\left\lfloor\frac{1}{2c\delta^{s}}\right\rfloor\delta\right]\right) \leq \sum_{i=1}^{\left\lfloor\frac{1}{2c\delta^{s}}\right\rfloor}\mu_\delta\left(\left[(i-1)\delta,i\delta\right]\right)
    \leq \sum_{i=1}^{\left\lfloor\frac{1}{2c\delta^{s}}\right\rfloor}c\delta^s
    = \left\lfloor\frac{1}{2c\delta^{s}}\right\rfloor c\delta^s
    \leq \frac{1}{2c\delta^{s}} c\delta^s
    = \frac{1}{2},
\end{equation*}
so $\widetilde{\mu_\delta}(\{a_n\})\geq\frac{1}{2}$ for all $0<\delta<\delta_0$. Furthermore, $\widetilde{\mu_\delta}$ is supported on $\{a_n\} \cap \left(\left\lfloor\frac{1}{2c\delta^{s}}\right\rfloor\delta,1\right]$. For each of the finite number of point masses, denote their positions by $x_i\in\{a_n\} \cap \left(\left\lfloor\frac{1}{2c\delta^{s}}\right\rfloor\delta,1\right]$ and their respective masses by $m_i\in(0,1]$ (for $1\leq i \leq n_\delta$). Clearly, $x_i>\left\lfloor\frac{1}{2c\delta^{s}}\right\rfloor\delta \geq \frac{1}{4c}\delta^{1-s}$ for all $1\leq i \leq n_\delta$.

We now construct appropriate measures to apply the mass distribution principle. First, we define the normalised $(d-1)$-spherical measure of radius $x>0$ by
\begin{equation}\label{sphericalmeasure}
    \sigma_{x}^{d-1}:=\frac{1}{\mathcal{H}^{d-1}\left(\mathcal{S}_{x}^{d-1}\right)}\mathcal{H}^{d-1}\restriction_{\mathcal{S}_{x}^{d-1}}.
\end{equation}
Now, we define
\begin{equation}
    \lambda_\delta:=\sum_{i=1}^{n_\delta}m_i\sigma_{x_i}^{d-1}.
\end{equation}
Then $\lambda_\delta$ is a Borel measure supported on $C^d(\{a_n\})$. The total mass carried is:
\begin{align*}
    \lambda_\delta\left(C^d(\{a_n\})\right) &=\sum_{i=1}^{n_\delta}m_i\sigma_{x_i}^{d-1}\left(C^d(\{a_n\})\right)\\
    &= \sum_{i=1}^{n_\delta}m_i\frac{1}{\mathcal{H}^{d-1}\left(\mathcal{S}_{x_i}^{d-1}\right)}\mathcal{H}^{d-1}\left(\mathcal{S}_{x_i}^{d-1}\right)\\
    &=\sum_{i=1}^{n_\delta}m_i
    \geq \frac{1}{2}.
\end{align*}

Now, suppose that $U\subseteq\mathbb{R}^d$ is such that $\delta^{\frac{1}{\theta}} \leq |U| \leq \delta$. Let $I\subseteq\{1,...,n_\delta\}$ be the finite index set such that $U$ intersects $\mathcal{S}_{x_i}^{d-1}$ for all $i\in I$. Then there is a set $V\subseteq\mathbb{R}$ such that $|V|\leq|U|$ and $x_i\in V$ for all $i\in I$. Now, we have:
\begin{equation}\label{propineq}
    \sum_{i\in I} \lambda_\delta\left(\mathcal{S}_{x_i}^{d-1}\right) = \sum_{i\in I} m_i
    = \sum_{i\in I} \widetilde{\mu_\delta}(\{x_i\})
    \leq \widetilde{\mu_\delta}(V)
    \leq \mu_\delta(V)
    \leq c|V|^s
    \leq c|U|^s.
\end{equation}

Therefore, the mass carried by $U$ is:
\begin{align*}
    \lambda_\delta\left(U\right) = \sum_{i\in I} \lambda_\delta\left(U\cap\mathcal{S}_{x_i}^{d-1}\right)
    &= \sum_{i\in I} m_i \sigma_{x_i}^{d-1}\left(U\right)\\
    &= \sum_{i\in I} m_i \frac{1}{\mathcal{H}^{d-1}\left(\mathcal{S}_{x_i}^{d-1}\right)}\mathcal{H}^{d-1}\restriction_{\mathcal{S}_{x_i}^{d-1}} \left(U\right)\\
    &= \sum_{i\in I} m_i \frac{1}{\eta_{d-1}x_i^{d-1}}\mathcal{H}^{d-1} \left(U\cap\mathcal{S}_{x}^{d-1}\right)\\
    &\leq \sum_{i\in I} m_i \frac{1}{\eta_{d-1}x_i^{d-1}}\eta_{d-1}|U|^{d-1}\\
    &\leq \sum_{i\in I} m_i \frac{1}{\left(\frac{1}{4c}\delta^{1-s}\right)^{d-1}}|U|^{d-1}\\
    &= (4c)^{d-1}\delta^{-(1-s)(d-1)}|U|^{d-1} \sum_{i\in I} m_i\\
    &\leq (4c)^{d-1}\delta^{-(1-s)(d-1)}|U|^{d-1} c|U|^s\\
    &(\text{by }(\ref{propineq}))\\
    &= 4^{d-1} c^{d}\delta^{-(1-s)(d-1)}|U|^{s+d-1-ds} |U|^{ds}\\
    &\leq 4^{d-1} c^{d}\delta^{-(1-s)(d-1)}\delta^{s+d-1-ds} |U|^{ds}\\
    &= 4^{d-1} c^{d} |U|^{ds}.\\
\end{align*}
Thus, by the mass distribution principle \cite[Proposition 2.2]{intermediate}, for all $0<s<\lodim_\theta\left(\{a_n\}\right)$, we have $\lodim_\theta\left(C^d(\{a_n\})\right) \geq ds$. Hence, we conclude that $\lodim_\theta\left(C^d(\{a_n\})\right) \geq d\lodim_\theta\left(\{a_n\}\right)$.

For the statement on the upper intermediate dimensions, we follow the same steps as above, but for a specific sequence $\delta\to0^+$ instead of all $0<\delta<\delta_0$.
\end{proof}

Proposition \ref{upbound} and Theorem \ref{lobound} give us the following corollary relating to continuity at zero:

\begin{corollary}
Let $(a_n)_{n\in\mathbb{N}}$ be a decreasing sequence in $\mathbb{R}$ converging to $0$ and let $d\geq 2$.

\begin{enumerate}
    \item If the intermediate dimensions of $\{a_n\}$ are continuous at zero, then so are the intermediate dimensions of $C^d\left(\{a_n\}\right)$.
    \item For $\theta\in(0,1]$, if $\{a_n\}$ has full $\theta$-intermediate dimension (that is, equal to unity), then $C^d\left(\{a_n\}\right)$ will have full $\theta$-intermediate dimension (equal to $d$).
\end{enumerate} 
\end{corollary}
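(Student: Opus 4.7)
The plan is to deduce both parts of the corollary as direct consequences of the bounds established in Proposition \ref{upbound} and Theorem \ref{lobound}, combined with the observation that $C^d(\{a_n\})$ is a countable union of $(d-1)$-spheres and hence $\dim_H\left(C^d(\{a_n\})\right) = d-1$ by countable stability of the Hausdorff dimension.

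For part (1), continuity at zero of the intermediate dimensions of $\{a_n\}$ together with the countability of $\{a_n\}$ gives $\lim_{\theta\to 0^+}\updim_\theta(\{a_n\}) = \lim_{\theta\to 0^+}\lodim_\theta(\{a_n\}) = 0$. I would then apply Proposition \ref{upbound} to obtain
\begin{equation*}
    \updim_\theta\left(C^d(\{a_n\})\right) \leq d-1 + \updim_\theta(\{a_n\}),
\end{equation*}
and similarly for the lower intermediate dimensions. Letting $\theta\to 0^+$ yields $\limsup_{\theta\to 0^+}\updim_\theta(C^d(\{a_n\})) \leq d-1$. The reverse inequality $\updim_\theta(C^d(\{a_n\})) \geq \dim_H(C^d(\{a_n\})) = d-1$ comes for free from monotonicity in $\theta$. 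This pins down the limit at $d-1 = \dim_H(C^d(\{a_n\}))$, proving continuity at zero for the upper (and analogously lower) intermediate dimensions of $C^d(\{a_n\})$.

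For part (2), I would simply invoke Theorem \ref{lobound}: if $\dim_\theta(\{a_n\}) = 1$ (for either the upper or lower version), then
\begin{equation*}
    \dim_\theta\left(C^d(\{a_n\})\right) \geq d\cdot 1 = d,
\end{equation*}
while the trivial upper bound $\dim_\theta(C^d(\{a_n\})) \leq d$ (since $C^d(\{a_n\}) \subseteq \mathbb{R}^d$) forces equality.

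Since the corollary really only assembles the previously established inequalities, there is no substantive obstacle here — the only minor point requiring care is remembering that $\dim_H(\{a_n\}) = 0$ for any countable set, so that continuity at zero for $\{a_n\}$ translates into the limiting value $0$ needed to match the target $d-1$ on the sphere side. I would present the proof as two short paragraphs, one per part, each a single chain of inequalities.
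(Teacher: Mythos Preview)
Your proposal is correct and matches the paper's intended argument: the corollary is stated immediately after Proposition~\ref{upbound} and Theorem~\ref{lobound} as a direct consequence of them, and you have filled in precisely the details the paper leaves implicit (using Proposition~\ref{upbound} plus $\dim_H C^d(\{a_n\})=d-1$ for part~(1), and Theorem~\ref{lobound} plus the ambient bound $d$ for part~(2)).
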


The bounds in (\ref{pointsbounds}) leave a significant gap, and it is not yet known whether the intermediate dimensions of $C^d\left(\{a_n\}\right)$ can be calculated using only the intermediate dimensions of $\{a_n\}$. In fact, our bounds (\ref{pointsbounds}) do not preclude the case of the intermediate dimensions of $\{a_n\}$ being discontinuous at zero and the intermediate dimensions of $C^d\left(\{a_n\}\right)$ being continuous at zero.

\section{Isolated points on concentric spheres}

By construction, the intermediate dimensions of $C_p^d=\bigcup_{i\in\mathbb{N}}\mathcal{S}_{1/i^p}^{d-1}\subseteq\mathbb{R}^d$ have to be between $d-1$ and $d$. Furthermore, as we move to higher-dimensional ambient Euclidean space, the range of values of the parameter $p>0$ for which non-trivial interpolation is observed shrinks. Now, we consider countable subsets of $C_p^d$ since they can display interpolation over a wider range. Due to countable stability, the Hausdorff dimension of these sets will be zero. In order to keep the discussion focused around the structure of the $C_p^d$ sets, we require that the only limit point is the origin. It is straightforward to show that the condition that the origin is the only limit point is equivalent to the condition that there are only a finite number of points on each sphere, and also equivalent to the condition that each point is isolated. Such sets in $\mathbb{R}^2$ were studied by Mendivil and Saunders \cite{measurability} as examples of sets with box dimension $d\in(0,2)$ and arbitrary positive Minkowski content.

For $i\in\mathbb{N}$, we denote the (finite) number of points contained in the $i$th sphere, $\mathcal{S}_{1/i^p}^{d-1}$, by $b_i$. These sets (for general $b_i$) are difficult to work with, and we will only provide a sufficient condition that ensures continuity at zero.

Such sets would only be interesting if there can be non-trivial interpolation. That is, if $\dim_B(E)>0$. This is indeed the case, as long as there are non-empty spheres at regular intervals. Let $E\subseteq C_p^d$ be a set of isolated points on concentric spheres. If $\liminf_{n\to\infty}\frac{\#\{1\leq i\leq n : b_i>0\}}{n}>0$, then $\lodim_\theta(E)\geq\frac{\theta}{p+\theta}$ for all $\theta\in[0,1]$. Similarly, if $\limsup_{n\to\infty}\frac{\#\{1\leq i\leq n : b_i>0\}}{n}>0$, then $\updim_\theta(E)\geq\frac{\theta}{p+\theta}$ for all $\theta\in[0,1]$.

On the other hand, it is also possible for such sets to have zero box dimension. For example, the set $\left\{\frac{1}{2^{np}}:n\in\mathbb{N}\right\}\times\{0\}\times...\times\{0\}\subseteq C_p^d$.

We now provide a sufficient condition on the growth of $b_i$ that ensures that the intermediate dimensions will be continuous at zero.

\begin{proposition}\label{points}
For $d\geq2$ and $p>0$, let $E\subseteq C_p^d$ be a set of isolated points on concentric spheres such that the number of points contained in $\mathcal{S}_{1/i^p}^{d-1}$, $b_i$, for $i\in\mathbb{N}$, satisfies $\limsup_{n\to\infty}\frac{\log\left(\sum_{i=1}^{n}b_i\right)}{\log(n)}<\infty$. Then the intermediate dimensions of $E$ are continuous at zero.

If we only have that $\liminf_{n\to\infty}\frac{\log\left(\sum_{i=1}^{n}b_i\right)}{\log(n)}<\infty$, then we can only conclude that the lower intermediate dimensions of $E$ are continuous at zero.
\end{proposition}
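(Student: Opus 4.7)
The set $E$ is a countable union of finite sets, hence countable, so $\dim_H(E) = 0$; since $\updim_0 = \lodim_0 = \dim_H$, establishing continuity at zero reduces to showing $\updim_\theta(E) \to 0$ (respectively, $\lodim_\theta(E) \to 0$) as $\theta \to 0^+$. The plan is to bound $\updim_\theta(E) \leq \beta\theta/p$ directly, where $\beta$ is furnished by the hypothesis; letting $\theta \to 0^+$ then yields continuity. Specifically, the limsup hypothesis provides $\beta > 0$ and $N \in \mathbb{N}$ with $\sum_{i=1}^n b_i \leq n^\beta$ for every $n \geq N$, while the liminf hypothesis provides such a $\beta$ together with only an increasing sequence $(n_k)_{k \in \mathbb{N}}$ along which $\sum_{i=1}^{n_k} b_i \leq n_k^\beta$.

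Fix $\theta \in (0,1]$, take small $\delta > 0$, and set $M := \lceil \delta^{-\theta/p}\rceil$. Since $M^p \geq \delta^{-\theta}$, the ball $B(0, 1/M^p)$ is contained in $B(0, \delta^\theta)$, so it can be covered by a bounded number $C_d := (2\sqrt{d}+1)^d$ of $d$-cubes of side $\delta^\theta/\sqrt{d}$ (each of diameter $\delta^\theta$); this accounts for all points of $E$ on spheres $\mathcal{S}_{1/i^p}^{d-1}$ with $i > M$. The remaining points, those on spheres with $i \leq M$, are covered individually by balls of diameter $\delta$. Each set in the resulting cover $\mathcal{U}$ has diameter in $[\delta, \delta^\theta]$, and provided $M \geq N$, the $s$-sum satisfies
\begin{equation*}
    \sum_{U\in\mathcal{U}} |U|^s \leq \left(\sum_{i=1}^M b_i\right) \delta^s + C_d \delta^{\theta s} \leq M^\beta \delta^s + C_d \delta^{\theta s} \leq 2^\beta \delta^{s-\beta\theta/p} + C_d \delta^{\theta s},
\end{equation*}
which tends to $0$ as $\delta \to 0^+$ whenever $s > \beta\theta/p$.

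Under the limsup hypothesis, $M \geq N$ is automatic for all sufficiently small $\delta$, so the cover succeeds uniformly and I conclude $\updim_\theta(E) \leq \beta\theta/p \to 0$ as $\theta \to 0^+$, giving continuity at zero for both intermediate dimensions. Under the weaker liminf hypothesis, the polynomial bound $\sum_{i=1}^M b_i \leq M^\beta$ is only available when $M = n_k$; the identical construction then works at the specific scales $\delta_k := n_k^{-p/\theta}$, which form a sequence tending to $0$. This is enough to yield $\lodim_\theta(E) \leq \beta\theta/p$, since the definition of $\lodim_\theta$ requires only the existence of a suitable $\delta < \delta_0$ for each $\delta_0 > 0$, but it falls short of the for-all-$\delta$ condition in the definition of $\updim_\theta$. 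The only real subtlety is tuning $M$ so that the tail $\{|x| \leq 1/M^p\}$ fits inside a $\delta^\theta$-scale box while $\sum_{i=1}^M b_i$ grows only polynomially in $1/\delta$; the choice $M = \lceil \delta^{-\theta/p}\rceil$ resolves both requirements simultaneously, so no genuine obstacle arises.
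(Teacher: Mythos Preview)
Your proof is correct and follows the same overall strategy as the paper: split at a threshold $M$, cover the tail $\{|x|\leq 1/M^p\}$ by $\delta^\theta$-cubes, and cover the remaining $\sum_{i=1}^{M}b_i$ points individually by $\delta$-balls. The difference lies in the choice of $M$. The paper takes $M=\lceil \delta^{-(s-\theta s+\theta d)/(l+pd)}\rceil$ and then has to expand $\bigl(2\sqrt{d}/(M^p\delta^\theta)+1\bigr)^d$ via the binomial theorem, eventually obtaining the sharper bound $\updim_\theta(E)\leq \frac{\theta l d}{pd+\theta l}$. Your choice $M=\lceil \delta^{-\theta/p}\rceil$ forces $1/M^p\leq \delta^\theta$, so the tail is covered by a \emph{fixed} number $C_d$ of cubes and the binomial computation disappears; the price is the cruder bound $\updim_\theta(E)\leq \beta\theta/p$. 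Since the proposition only asks for continuity at $\theta=0$, either bound suffices, and your route is cleaner for that purpose; the paper's optimisation would matter only if one cared about the precise rate at which $\updim_\theta(E)$ vanishes.
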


\begin{proof}
Suppose that $\limsup_{n\to\infty}\frac{\log\left(\sum_{i=1}^{n}b_i\right)}{\log(n)}<l<\infty$. Then:
\begin{gather*}
    \limsup_{n\to\infty}\frac{\log\left(\sum_{i=1}^{n}b_i\right)}{\log(n)} < l\\
    \limsup_{n\to\infty} \log(n)\left(\frac{\log\left(\sum_{i=1}^{n}b_i\right)}{\log(n)} -l\right) < 0\\
    \limsup_{n\to\infty} \log\left(\sum_{i=1}^{n}b_i\right) -\log(n^l) < 0\\
    \limsup_{n\to\infty} \left(\frac{\sum_{i=1}^{n}b_i}{n^l}\right) < 1,\\
\end{gather*}
so there is some $N\in\mathbb{N}$ such that for all $n\geq N$, we have $\sum_{i=1}^{n}b_i<n^l$.

Let $s>\frac{\theta ld}{pd+\theta l}$, let $\delta_0=(N+1)^{-\frac{l+pd}{s-\theta s+ \theta d}}$, and let $0<\delta<\delta_0$ be given. Take $M=\left\lceil\delta^{-\frac{s-\theta s+\theta d}{l+pd}}\right\rceil$. Then $M-1\geq N$.

We can cover $B\left(0,\frac{1}{M^p}\right)$ by at most $\left(\frac{2\sqrt{d}}{M^p \delta^\theta}+1\right)^d$ $d$-cubes of side length $\delta^\theta/\sqrt{d}$ (and thus diameter $\delta^\theta$). By the binomial theorem, $\left(\frac{2\sqrt{d}}{M^p \delta^\theta}+1\right)^{d}= \sum_{k=0}^{d} \binom{d}{k}\left(\frac{2\sqrt{d}}{M^p \delta^\theta}\right)^k$.

We also know that $E\setminus B\left(0,\frac{1}{M^p}\right)$ comprises $\sum_{i=1}^{M-1}b_i$ points, and can therefore be covered by $\sum_{i=1}^{M-1}b_i$ sets of diameter $\delta$. Hence, we get a cover $\mathcal{U}$ of $E$ such that $\delta\leq|U|\leq\delta^\theta$ for all $U\in\mathcal{U}$. Summing over all the sets in this cover:
\begin{align*}
    \sum_{U\in\mathcal{U}}|U|^s &\leq \delta^{\theta s} \sum_{k=0}^{d} \binom{d}{k}\left(\frac{2\sqrt{d}}{M^p \delta^\theta}\right)^k + \delta^s \sum_{i=1}^{M-1}b_i\\
    &\leq \sum_{k=0}^{d} \left[\binom{d}{k}\left(2\sqrt{d}\right)^k M^{-kp} \delta^{\theta s - \theta k} \right] + \delta^s (M-1)^l\\
    &\leq \sum_{k=0}^{d} \left[\binom{d}{k}\left(2\sqrt{d}\right)^k \delta^{\frac{s-\theta s+\theta d}{l+pd}kp+\theta s-\theta k} \right] + \delta^{s-\frac{s-\theta s+\theta d}{l+pd}l}\\
    &= \sum_{k=0}^{d} \left[\binom{d}{k}\left(2\sqrt{d}\right)^k \delta^{\frac{kps-\theta kps+\theta kpd+\theta sl + \theta spd - \theta kl - \theta kpd}{l+pd}} \right] + \delta^{\frac{sl+spd-sl+\theta sl-\theta ld}{l+pd}}\\
    &= \sum_{k=0}^{d} \left[\binom{d}{k}\left(2\sqrt{d}\right)^k \delta^{\frac{\frac{k}{d}\left(s[pd+\theta l] -[\theta ld]\right)+\left(1-\frac{k}{d}\right)\theta sl + \theta sp(d-k)}{l+pd}} \right] + \delta^{\frac{s[pd+\theta l]-[\theta ld]}{l+pd}},\\
\end{align*}
which converges to $0$ as $\delta\to0^+$ under our assumption that $s>\frac{\theta ld}{pd+\theta l} \geq0$, and noting that $0\leq\frac{k}{d}\leq1$ within the summation.

This allows us to conclude that $\updim_\theta(E)\leq\frac{\theta ld}{pd+\theta l}$, which decreases to $0$ as $\theta\to0$.

If we only have that $\liminf_{n\to\infty}\frac{\log\left(\sum_{i=1}^{n}b_i\right)}{\log(n)}<\infty$, then we only have that $\liminf_{n\to\infty} \left(\frac{\sum_{i=1}^{n}b_i}{n^l}\right) < 1$, which gives the weaker observation that there is some $M_j\to\infty$ such that for all $j\in\mathbb{N}$, we have $\sum_{i=1}^{M_j-1}b_i<(M_j-1)^l$. Letting $s>\frac{\theta ld}{pd+\theta l}$ and setting $\delta_j=M_j^{-\frac{l+pd}{s-\theta s+ \theta d}}$ for $j\in\mathbb{N}$, we get a sequence $\delta_j\to0$. Now using the same cover as before, for each $\delta_j$, there is a cover $\mathcal{U}_j$ of $E$ with $\delta_j\leq|U|\leq\delta_j^\theta$ such that $\sum_{U\in\mathcal{U}}|U|^s\to0$ as $j\to\infty$. This allows us to conclude that $\lodim_\theta(E)\leq s$ for all $s>\frac{\theta ld}{pd+\theta l}$, and so $\lodim_\theta(E)\leq\frac{\theta ld}{pd+\theta l}$.
\end{proof}

For the condition provided in Proposition \ref{points} to be useful, it must be possible for the intermediate dimensions of such sets to be discontinuous at zero. We show this with an example in $\mathbb{R}^2$.
\begin{example}
Let $p>0$ and $E\subseteq C_p^2$ be the set comprising $b_i=2^i$ points evenly spread out on $\mathcal{S}_{1/i^p}^{1}$ (for $i\in\mathbb{N}$). Then the intermediate dimensions of $E$ are discontinuous at zero.
\end{example}

\begin{proof}
Let $\theta\in(0,1]$ and write $\gamma=\frac{\theta}{4p}>0$. There is some $\delta_\gamma>0$ such that for all $0<\delta<\delta_\gamma$, we have $\frac{1}{2^{\delta^{-\gamma}}}<\delta^{\frac{1}{2}}$. Now let $0<\delta<\min\left\{\delta_\gamma,1\right\}$ be given, and set $M=\left\lceil\delta^{-\gamma}\right\rceil$. Define $\mu_\delta:=\frac{1}{2^M}\mathcal{H}^0\restriction_{\mathcal{S}_{1/M^p}^{1}}$. Since there are $2^M$ points on $\mathcal{S}_{1/M^p}^{1}$ and each point has mass $\frac{1}{2^M}$, $\mu_\delta$ is a Borel probability measure supported on $E$.

Suppose that $U\subseteq\mathbb{R}^2$ is such that $\delta\leq|U|\leq\delta^\theta$. Then $U$ intersects at most $\frac{\pi|U|}{2\pi/{M^p}}2^M+1$ points carrying mass. This implies:
\begin{align*}
    \mu_\delta(U) \leq \left(\frac{\pi|U|}{2\pi/{M^p}}2^M+1\right)\frac{1}{2^M}
    &= \frac{1}{2}|U|M^p + \frac{1}{2^M}\\
    &\leq \frac{1}{2}\left(\delta^{-\gamma}+1\right)^p\left(\delta^\theta\right)^{\frac{1}{2}}|U|^{\frac{1}{2}}+\frac{1}{2^{\delta^{-\gamma}}}\\
    &\leq \frac{1}{2}\left(\left(2\delta^{-\gamma}\right)^p+\left(2\right)^p\right)\left(\delta^\theta\right)^{\frac{1}{2}}|U|^{\frac{1}{2}}+\frac{1}{2^{\delta^{-\gamma}}}\\
    &= 2^{p-1} \left(\delta^{\frac{\theta}{4}}+\delta^{\frac{\theta}{2}}\right)|U|^{\frac{1}{2}} + \frac{1}{2^{\delta^{-\gamma}}}\\
    &\leq 2^{p-1} \left(2\right)|U|^{\frac{1}{2}} + \delta^\frac{1}{2}\\
    &\leq \left(2^{p} + 1\right) |U|^{\frac{1}{2}},\\
\end{align*}
so by the mass distribution principle \cite[Proposition 2.2]{intermediate}, $\lodim_\theta(E)\geq\frac{1}{2}$ for all $\theta\in(0,1]$.
\end{proof}

\section{Attenuated topologist's sine curve}

The set $\left\{\left(x,\sin\left(\frac{1}{x}\right)\right):x\in(0,1]\right\}\cup\{(0,0)\}$ (Figure \ref{atten}) is known as the topologist's sine curve as it displays interesting properties under the standard Euclidean topology \cite[pp. 137--138]{counterexamples}.

\begin{figure}
    \centering
    \begin{subfigure}[h]{6cm}
        \centering
        \includegraphics[width=3cm]{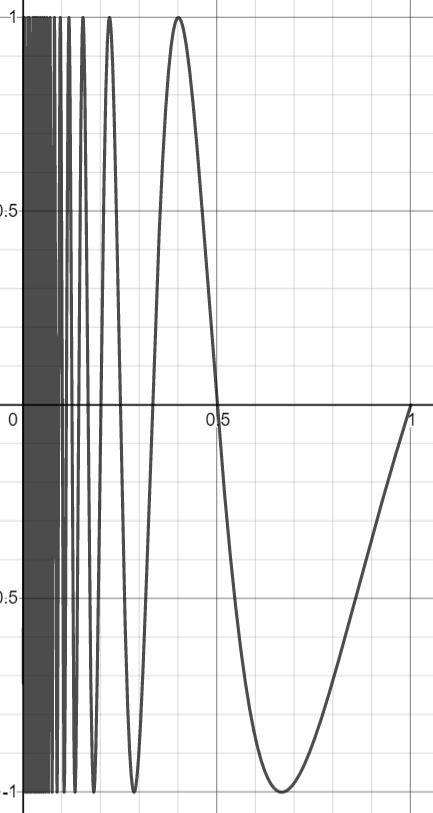} 
        \caption{The set $T_1\subseteq\mathbb{R}^2$.} \label{atten}
    \end{subfigure}
    \begin{subfigure}[h]{6cm}
        \centering
        \includegraphics[width=3cm]{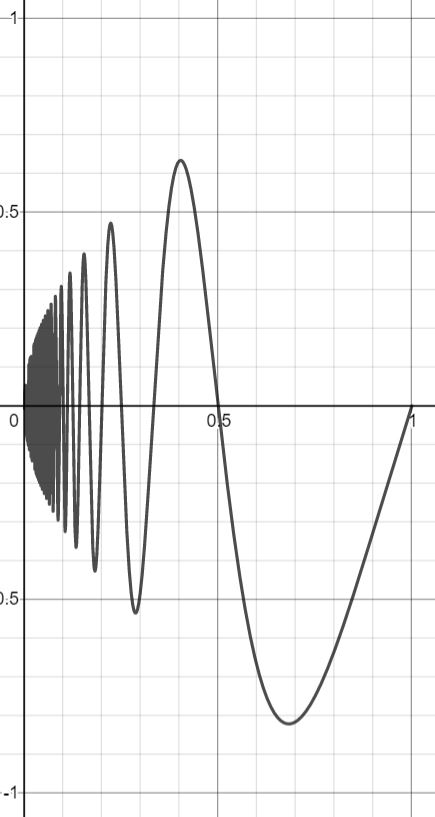}
        \caption{The set $T_{1,0.5}\subseteq\mathbb{R}^2$.} \label{atten2}
    \end{subfigure}
    \caption{Illustrations of (\subref{atten}) the topologist's sine curve and (\subref{atten2}) the attenuated topologist's sine curve.}
\end{figure}

For $p>0$, let
\begin{equation}
    T_p:=\left\{\left(\frac{1}{t^p},\sin(\pi t)\right):t\geq1\right\}.
\end{equation}
That is, $T_p$ is the graph $y=f(x)$ of the function $f:(0,1]\to\mathbb{R}$ defined by $f(x)=\sin\left(\pi x^{-1/p}\right)$.

This looks a lot like the Cartesian product $F_p\times[-1,1]$, and indeed, the intermediate dimensions of this set follow the product formula \cite[Proposition 2.5]{intermediate}. That is, for $p>0$ and $0\leq\theta\leq1$,
\begin{equation}
    \lodim_\theta (T_p) = \updim_\theta (T_p) = 1+\frac{\theta}{\theta+p}=\frac{2\theta+p}{\theta+p}.
\end{equation}

We construct a more interesting set, this time with two parameters, by attenuating the amplitude of the topologist's sine curve.

For $p>0$ and $q>0$, let
\begin{equation}
    T_{p,q}:=\left\{\left(\frac{1}{t^p},\frac{1}{t^{pq}}\sin(\pi t)\right):t\geq1\right\}.
\end{equation}
That is, $T_{p,q}$ is the graph $y=f(x)$ of the function $f:(0,1]\to\mathbb{R}$ defined by $f(x)=x^q\sin\left(\pi x^{-1/p}\right)$. Figure \ref{atten2} illustrates the attenuated topologist's sine curve with $p=1$ and $q=0.5$.


\begin{thm}\label{attenuated}
For $p>0$, $q>0$, and $0\leq\theta\leq1$,
\begin{equation}\label{attenuatedformula}
    \lodim_\theta (T_{p,q}) = \updim_\theta (T_{p,q}) =
    \begin{cases}
        \frac{p(1+q)+2\theta(1-pq)}{p(1+q)+\theta(1-pq)} &\text{if } 0<pq<1\\
        1 &\text{if } pq\geq1
    \end{cases}.
\end{equation}
\end{thm}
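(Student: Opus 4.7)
The plan is to mirror the strategy of Theorem \ref{concentric1}, viewing $T_{p,q}$ as a one-dimensional analogue of a concentric sphere configuration in which each half-period hump $\gamma_i := \{(t^{-p}, t^{-pq}\sin(\pi t)): t \in [i, i+1]\}$ plays the role of the $i$th sphere $\mathcal{S}_{1/i^p}^{d-1}$ in $\mathbb{R}^2$. Two structural facts drive the analogy. First, by the mean value theorem applied to $t \mapsto t^{-p}$, the horizontal separation between consecutive humps is of order $p/i^{p+1}$, matching the radial separation of the circles $\mathcal{S}_{1/i^p}^{1}$. Second, on $[i, i+1]$ the parametrisation has speed dominated by $\pi t^{-pq}|\cos(\pi t)|$ (the $y'$-term beats the $x'$-term whenever $pq < p+1$, which always holds in Case 2), so the arc length $L_i$ of each hump is of order $i^{-pq}$, playing the role of the surface measure $\eta_1 i^{-p}$ of the $i$th circle. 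Before the case split I would also note $\dim_H(T_{p,q}) = 1$: the parametrisation is Lipschitz on $[1,\infty)$, so $\dim_H \leq 1$, while connectedness with positive diameter gives $\dim_H \geq 1$; hence $\lodim_\theta(T_{p,q}) \geq 1$ for all $\theta \in [0,1]$.

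In the regime $pq \geq 1$, I would argue $\updim_B(T_{p,q}) \leq 1$ exactly as in Case 1 of Theorem \ref{concentric1}: with $M = \lceil \delta^{-1/(1+p)} \rceil$, the tail $\gamma([M, \infty))$ lies in the rectangle $[0, M^{-p}] \times [-M^{-pq}, M^{-pq}]$ and is covered by a $\delta$-grid of acceptable size, while the first $M-1$ humps each require at most $CL_i/\delta + 1$ sets of diameter $\delta$. Under $pq \geq 1$ both contributions collapse to $O(\delta^{-1}\log(1/\delta))$, giving $\updim_B(T_{p,q}) \leq 1$ and hence equality throughout.

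For the main case $0 < pq < 1$, set $s = \frac{p(1+q)+2\theta(1-pq)}{p(1+q)+\theta(1-pq)}$ and $M = \lceil \delta^{-(1-(1-\theta)(2-s))/(1+p)} \rceil$, the natural analogues of the quantities used in Case 2 of Theorem \ref{concentric1} (with the substitutions $d \rightsquigarrow 2$ and $p(d-1) \rightsquigarrow pq$). For the lower bound, I would put mass on the first $M$ humps by defining
\[
    \mu_\delta := \delta^{s-1} \sum_{i=1}^{M} \mathcal{H}^1\restriction_{\gamma_i},
\]
and verify $\mu_\delta(T_{p,q}) \geq c > 0$ by the same integral estimate used in Theorem \ref{concentric1}, with $\int x^{-p(d-1)}\,dx$ replaced by $\int x^{-pq}\,dx$; the choice of $s$ makes the resulting exponent of $\delta$ vanish. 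To bound $\mu_\delta(U)$ for $\delta \leq |U| \leq \delta^\theta$, I use two facts: (a) $U$ meets at most $M^{1+p}|U|/p + 1$ of the first $M$ humps by the separation estimate, and (b) on each such hump, since $x(t)=t^{-p}$ is strictly monotone and $y(t) = t^{-pq}\sin(\pi t)$ is unimodal on $[i,i+1]$, the intersection $U \cap \gamma_i$ is a union of at most two arcs that are monotone in both coordinates, each with arc length bounded by ($x$-span $+$ $y$-span) $\leq 2|U|$; hence $\mathcal{H}^1(U \cap \gamma_i) \leq 4|U|$. The ensuing algebra is a direct transcription of Theorem \ref{concentric1} and gives $\mu_\delta(U) \leq C|U|^s$, so the mass distribution principle \cite[Proposition 2.2]{intermediate} yields $\lodim_\theta(T_{p,q}) \geq s$. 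For the matching upper bound, fix $s < s' < 2$, cover the tail $\gamma([M, \infty))$ by a $\delta^\theta$-grid on its enclosing rectangle and each remaining hump by $\lceil L_i/\delta\rceil$ sets of diameter $\delta$; the resulting $(\delta, \delta^\theta)$-admissible cover $\mathcal{U}$ has $\sum_{U \in \mathcal{U}} |U|^{s'}$ expressible (after the same reorganisation as in Theorem \ref{concentric1}) as a finite sum of strictly positive powers of $\delta$, which tends to zero as $\delta \to 0^+$.

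The main obstacle I expect is the arc-length estimate $\mathcal{H}^1(U \cap \gamma_i) \leq 4|U|$. Unlike the spherical slicing bound $\mathcal{H}^{d-1}(U \cap \mathcal{S}_r^{d-1}) \leq \eta_{d-1}|U|^{d-1}$ of Theorem \ref{concentric1}, which is symmetric and purely geometric, here the estimate relies on the specific unimodal structure of $y(t)$ on each half-period and the resulting splitting into bi-monotone pieces. Once this is in place, the algebraic identity driving the cancellations is precisely the defining form of $s$, and the remaining computations transcribe directly from the concentric sphere argument.
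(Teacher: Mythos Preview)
Your proposal is correct and follows essentially the same route as the paper: the same case split on $pq$, the same measure $\mu_\delta = \delta^{s-1}\sum \mathcal{H}^1\!\restriction_{\gamma_i}$ for the lower bound, the same cutoff $M=\lceil\delta^{-(s-\theta s+2\theta-1)/(1+p)}\rceil$, and the same ``grid on the tail, length-cover on the humps'' strategy for the upper bound. The only visible differences are cosmetic: the paper covers the tail inside the wedge $\{|y|\le x^q\}$ rather than your enclosing rectangle (both give the same leading exponent), and where the paper asserts $\mathcal{H}^1(U\cap\gamma_i)\le 3|U|$ ``based on the shape of the arcs'', your unimodality-plus-bi-monotone-splitting argument giving $4|U|$ is in fact the more careful justification.
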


Before going into the proof, we wish to draw the reader's attention to an unexpected connection between the sets of concentric spheres in $\mathbb{R}^d$ and the attenuated topologist's sine curve. Let $d\geq2$. For any $p>0$ and $0\leq\theta\leq1$,
\begin{equation}
    \dim_\theta(C_p^d) -(d-1)=\dim_\theta(T_{p,(d-1)}) -1.
\end{equation}
This suggests that attenuating the topologist's sine curve by the polynomial factor $q=d-1$ mimics viewing the set of concentric spheres from $d$-dimensional ambient Euclidean space.

The formula (\ref{attenuatedformula}) also looks strikingly similar to the the formula for the intermediate dimensions of elliptical polynomial spirals, calculated by Burrell, Falconer, and Fraser \cite{elliptical}. For $q\geq p>0$, let
\begin{equation}
    S_{p,q}:=\left\{\left(\frac{1}{t^p}\sin(\pi t),\frac{1}{t^q}\cos(\pi t)\right):t\geq1\right\}\subseteq\mathbb{R}^2.
\end{equation}
Figure \ref{spiral2} illustrates the case with $p=1$ and $q=2$.



Then 
\begin{equation}
    \lodim_\theta \left(S_{p,q}\right) = \updim_\theta \left(S_{p,q}\right) =
    \begin{cases}
        \frac{p+q+2\theta(1-p)}{p+q+\theta(1-p)} &\text{if } 0<p<1\\
        1 &\text{if } p\geq1
    \end{cases}
\end{equation}
\cite[Theorem 2.1]{elliptical}.
Therefore, for $q\geq p>0$, we have $\updim_\theta \left(S_{p,q}\right) = \updim_\theta (T_{q,p/q})$ for all $0\leq\theta\leq1$, and similarly for the lower intermediate dimensions.

\begin{proof}

\emph{Case 1: $pq\geq1$.}
It is clear that $\dim_H(T_{p,q})=1$, since $T_{p,q}$ is a countable union of rectifiable curves. Therefore for all $0\leq\theta\leq1$, $\lodim_\theta(T_{p,q})\geq1$. We now construct a $\delta$-cover to show that $\updim_B(T_{p,q})\leq1$. Let $0<\delta<1$ and $M=\left\lceil\delta^{-\frac{1}{p(1+q)}}\right\rceil$. It is obvious that $\left\{\left(\frac{1}{t^p},\frac{1}{t^{pq}}\sin(\pi t)\right):t\geq M\right\} \subseteq \left\{(x,y(x)):0\leq x\leq\frac{1}{M^p},-x^q\leq y(x)\leq x^q\right\}$, which can be covered by at most $\sum_{i=1}^N\left(\frac{2(i\delta/\sqrt{2})^q}{\delta/\sqrt{2}}+1\right)$ squares of side length $\delta/\sqrt{2}$ in a grid of increasing height, where $N=\left\lceil\frac{\sqrt{2}}{\delta M^p}\right\rceil$. Now, we have:
\begin{align*}
    \sum_{i=1}^N\left(\frac{2(i\delta/\sqrt{2})^q}{\delta/\sqrt{2}}+1\right) &= N+ \frac{2\sqrt{2}^{1-q}\delta^q}{\delta} \sum_{i=1}^Ni^q\\
    &\leq N+ \frac{2\sqrt{2}^{1-q}\delta^q}{\delta} \int_{x=1}^{N+1} x^q dx\\
    &\leq N+ \frac{2\sqrt{2}^{1-q}}{1+q} \frac{\delta^q (N+1)^{1+q}}{\delta}\\
    &\leq \left(\frac{\sqrt{2}}{\delta M^p}+1\right) + \frac{2\sqrt{2}^{1-q}}{1+q} \frac{\delta^q (\frac{\sqrt{2}}{\delta M^p}+2)^{1+q}}{\delta}\\
    &\leq \sqrt{2}\delta^{\frac{1}{1+q}-1}+ \frac{2\sqrt{2}^{1-q}}{1+q} \frac{\delta^q (\sqrt{2}\delta^{\frac{1}{1+q}-1}+2)^{1+q}}{\delta} +1\\
    &\leq \sqrt{2}\delta^{\frac{1}{1+q}-1}+ \frac{2\sqrt{2}^{1-q}}{1+q} \frac{\delta^q 2^{\frac{3}{2}(1+q)} \delta^{(\frac{1}{1+q}-1)(1+q)}}{\delta} + \frac{2\sqrt{2}^{1-q}}{1+q} \frac{\delta^q 2^{1+q} 2^{1+q}}{\delta} +1\\
    &= \sqrt{2}\delta^{-\frac{q}{1+q}}+ \frac{2^{3+q}}{1+q} \delta^{-1} + \frac{2^{\frac{7+3q}{2}}}{1+q} \delta^{q-1} +1.\\
\end{align*}

We also calculate an upper bound on the length of $\left\{\left(\frac{1}{t^p},\frac{1}{t^{pq}}\sin(\pi t)\right):1\leq t\leq M\right\}$:
\begin{align*}
    \mathcal{H}^1 \left(\left\{\left(\frac{1}{t^p},\frac{1}{t^{pq}}\sin(\pi t)\right):1\leq t\leq M\right\}\right) &= \sum_{i=1}^{M-1} \mathcal{H}^1 \left(\left\{\left(\frac{1}{t^p},\frac{1}{t^{pq}}\sin(\pi t)\right):i\leq t\leq i+1\right\}\right)\\
    &\leq \sum_{i=1}^{M-1} \left(\frac{2}{i^{pq}}+\frac{1}{i^p}-\frac{1}{(i+1)^p}\right)\\
    &= 2 +\sum_{i=2}^{M-1} \frac{2}{i^{pq}} +\frac{1}{1^p}-\frac{1}{M^p}\\
    &\leq 2+ \sum_{i=2}^{M-1} \frac{2}{i} +1\\
    &\leq \int_{x=1}^{M-1} \frac{2}{x} dx +3\\
    &= 2\log(M-1) +3\\
    &\leq 2\log(\delta^{-\frac{1}{p(1+q)}}) +3\\
    &= \frac{2}{p(1+q)}\log\left(\frac{1}{\delta}\right) +3.\\
\end{align*}
Therefore, we can cover $\left\{\left(\frac{1}{t^p},\frac{1}{t^{pq}}\sin(\pi t)\right):1\leq t\leq M\right\}$ by at most $\frac{1}{\delta}\left(\frac{2}{p(1+q)}\log\left(\frac{1}{\delta}\right) +3\right) +1$ sets of diameter $\delta$.

This gives us a $\delta$-cover of $T_{p,q}$, proving that for $0<\delta<1$,
\begin{align*}
    N_\delta\left(T_{p,q}\right) &\leq \frac{1}{\delta}\left(\frac{2}{p(1+q)}\log\left(\frac{1}{\delta}\right) +3\right) +1 + \sqrt{2}\delta^{-\frac{q}{1+q}}+ \frac{2^{3+q}}{1+q} \delta^{-1} + \frac{2^{\frac{7+3q}{2}}}{1+q} \delta^{q-1} +1\\
    &= \frac{1}{\delta}\left(\frac{2}{p(1+q)}\log\left(\frac{1}{\delta}\right) +\frac{2^{3+q}}{1+q} +3\right) + \sqrt{2}\delta^{-\frac{q}{1+q}} + \frac{2^{\frac{7+3q}{2}}}{1+q} \delta^{q-1} +2.\\
\end{align*}

Now applying the definition of the upper box dimension, we have
\begin{align*}
    \updim_B\left(T_{p,q}\right) &= \limsup_{\delta\to0^+}\frac{\log N_\delta(T_{p,q})}{-\log(\delta)}\\
    &\leq \limsup_{\delta\to0^+} \frac{\log \left( \frac{1}{\delta}\left(\frac{2}{p(1+q)}\log\left(\frac{1}{\delta}\right) +\frac{2^{3+q}}{1+q} +3\right) + \sqrt{2}\delta^{-\frac{q}{1+q}} + \frac{2^{\frac{7+3q}{2}}}{1+q} \delta^{q-1} +2 \right)}{\log\left(\frac{1}{\delta}\right)}\\
    &\leq \limsup_{\delta\to0^+} \frac{\log \left(\frac{1}{\delta}\left(\frac{2}{p(1+q)}\log\left(\frac{1}{\delta}\right) +\frac{2^{3+q}}{1+q} +3\right) \right)}{\log\left(\frac{1}{\delta}\right)}\\
    &= \limsup_{\delta\to0^+} \frac{\log \left(\frac{1}{\delta}\right)+\log\left(\frac{2}{p(1+q)}\log\left(\frac{1}{\delta}\right) +\frac{2^{3+q}}{1+q} +3 \right)}{\log\left(\frac{1}{\delta}\right)} =1.
\end{align*}

\emph{Case 2: $0<pq<1$.}
First, we construct appropriate measures so as to derive a lower bound using the mass distribution principle. Write $s=\frac{p(1+q)+2\theta(1-pq)}{p(1+q)+\theta(1-pq)}$, let $0<\delta<4^{-\frac{1+p}{(s-\theta s + 2 \theta -1)(1-pq)}}$ be given, and let $M=\left\lceil\delta^{-\frac{s-\theta s +2\theta -1}{1+p}}\right\rceil$. Define $\mu_\delta:= \delta^{s-1} \sum_{i=1}^{M-1} \mathcal{H}^1\restriction_{\left\{\left(\frac{1}{t^p},\frac{1}{t^{pq}}\sin(\pi t)\right):i\leq t\leq i+1\right\}}$. That is, for some $U\subseteq\mathbb{R}^2$, $\mu_\delta(U)$ is the length of the intersection between $U$ and the first $M$ arcs (counting from the right) of $T_{p,q}$, scaled by the factor $\delta^{s-1}$.

The total mass carried by $T_{p,q}$ is then:
\begin{align*}
    \mu_\delta(T_{p,q}) &= \delta^{s-1} \sum_{i=1}^{M-1} \mathcal{H}^1\restriction_{\left\{\left(\frac{1}{t^p},\frac{1}{t^{pq}}\sin(\pi t)\right):i\leq t\leq i+1\right\}}(T_{p,q})\\
    &= \delta^{s-1} \sum_{i=1}^{M-1} \mathcal{H}^1\left({\left\{\left(\frac{1}{t^p},\frac{1}{t^{pq}}\sin(\pi t)\right):i\leq t\leq i+1\right\}}\right)\\
    &\geq \delta^{s-1} \sum_{i=2}^{M} \frac{1}{i^{pq}}\\
    &\geq \delta^{s-1} \int_{x=2}^{M} \frac{1}{x^{pq}} dx\\
    &= \frac{\delta^{s-1}}{1-pq} \left(M^{1-pq}-2^{1-pq}\right)\\
    &\geq \frac{\delta^{s-1}}{1-pq} \frac{M^{1-pq}}{2}\\
    &(\text{since }M^{1-pq}>4\text{ and }2^{1-pq}<2)\\
    &\geq \frac{1}{2(1-pq)} \delta^{s-1} \delta^{-\frac{s-\theta s + 2 \theta -1}{1+p}(1-pq)}\\
    &= \frac{1}{2(1-pq)} \delta^{\frac{(s-1)(1+p)-s(1-pq)+\theta s(1-pq) - 2\theta(1-pq) +(1-pq)}{1+p}}\\
    &= \frac{1}{2(1-pq)} \delta^{\frac{s[p(1+q)+\theta(1-pq)]-[p(1+q) + 2\theta(1-pq)]}{1+p}}\\
    &= \frac{1}{2(1-pq)},
\end{align*}
which is independent of our choice of $0<\delta<4^{-\frac{1+p}{(s-\theta s + 2 \theta -1)(1-pq)}}$.

Suppose that $U\subseteq\mathbb{R}^2$ is such that $\delta\leq|U|\leq\delta^\theta$. Then $U$ intersects at most $\frac{|U|M^{1+p}}{p}+2$ of the arcs $\left\{\left(\frac{1}{t^p},\frac{1}{t^{pq}}\sin(\pi t)\right):i\leq t\leq i+1\right\}$ (for $1\leq i\leq M-1$) (by the mean value theorem). Based on the shape of the arcs (sine curve), for each arc that $U$ intersects, the length of this intersection does not exceed $3|U|$. Thus, the mass that $|U|$ carries can be bounded above by:
\begin{align*}
    \mu_\delta(U) &\leq \delta^{s-1}\left(\frac{|U|M^{1+p}}{p}+2\right)(3|U|)\\
    &\leq \delta^{s-1}\left(\frac{|U|\left(\delta^{-\frac{s-\theta s +2\theta -1}{1+p}}+1\right)^{1+p}}{p}+2\right)(3|U|)\\
    &\leq \delta^{s-1}\left(\frac{|U|\left(2\delta^{-\frac{s-\theta s +2\theta -1}{1+p}}\right)^{1+p}}{p}+2\right)(3|U|)\\
    &= 3 \delta^{s-1} \left(\frac{2^{1+p}\delta^{-s+\theta s -2\theta +1}}{p}|U|^{2-s}+2|U|^{1-s}\right)|U|^s\\
    &\leq 3 \delta^{s-1} \left(\frac{2^{1+p}\delta^{-s+\theta s -2\theta +1}}{p}\delta^{\theta(2-s)}+2\delta^{\theta(1-s)}\right)|U|^s\\
    &= 3 \left(\frac{2^{1+p}}{p}\delta^{1-s+s-1} + 2\delta^{(1-\theta)(s-1)}\right)|U|^s\\
    &\leq 3\left(\frac{2^{1+p}}{p}+2\right)|U|^s\\
    &(\text{since }s-1=\frac{\theta(1-pq)}{p(1+q)+\theta(1-pq)}\geq0).\\
\end{align*}

By the mass distribution principle \cite[Proposition 2.2]{intermediate}, we conclude that $\lodim_\theta(T_{p,q})\geq s=\frac{p(1+q)+2\theta(1-pq)}{p(1+q)+\theta(1-pq)}$.

Moving on, we construct an efficient cover. Taking $0<\delta<1$ as given, suppose that $s>\frac{p(1+q)+2\theta(1-pq)}{p(1+q)+\theta(1-pq)}$, and let $M=\left\lceil\delta^{-\frac{s-\theta s +2\theta -1}{1+p}}\right\rceil$. We observe that \\$\left\{\left(\frac{1}{t^p},\frac{1}{t^{pq}}\sin(\pi t)\right):t\geq M\right\}\subseteq \left\{(x,y(x)):0\leq x\leq\frac{1}{M^p},-x^q\leq y(x)\leq x^q\right\}$ can be covered by at most $\sum_{i=1}^N\left(\frac{2(i\delta^\theta/\sqrt{2})^q}{\delta^\theta/\sqrt{2}}+1\right)$ squares of side length $\delta^\theta/\sqrt{2}$ (and hence diameter $\delta^\theta$), where $N=\left\lceil\frac{\sqrt{2}}{\delta^\theta M^p}\right\rceil$. We have:
\begin{align*}
    \sum_{i=1}^N \left(\frac{2(i\delta^\theta/\sqrt{2})^q}{\delta^\theta/\sqrt{2}}+1\right) &= N + 2\sqrt{2}^{1-q}\delta^{\theta(q-1)} \sum_{i=1}^N i^q\\
    &\leq N + 2\sqrt{2}^{1-q}\delta^{\theta(q-1)} \int_{x=1}^{N+1} x^q dx\\
    &\leq N + \frac{2\sqrt{2}^{1-q}}{1+q}\delta^{\theta(q-1)} (N+1)^{1+q}\\
    &\leq \left(\frac{\sqrt{2}}{\delta^\theta M^p}+1\right) + \frac{2\sqrt{2}^{1-q}}{1+q}\delta^{\theta(q-1)} \left(\frac{\sqrt{2}}{\delta^\theta M^p}+2\right)^{1+q}\\
    &\leq \frac{\sqrt{2}}{\delta^\theta \delta^{-\frac{s-\theta s +2\theta -1}{1+p}p}} + \frac{2\sqrt{2}^{1-q}}{1+q}\delta^{\theta(q-1)} \left(\frac{\sqrt{2}}{\delta^\theta \delta^{-\frac{s-\theta s +2\theta -1}{1+p}p}}+2\right)^{1+q}+1\\
    &= \sqrt{2} \delta^{\frac{sp-\theta sp+\theta p-p-\theta}{1+p}} + \frac{2\sqrt{2}^{1-q}}{1+q}\delta^{\theta(q-1)} \left(\sqrt{2}\delta^{\frac{sp-\theta sp+\theta p-p-\theta}{1+p}}+2\right)^{1+q}+1\\
    &\leq \sqrt{2} \delta^{\frac{sp-\theta sp+\theta p-p-\theta}{1+p}} + \frac{2\sqrt{2}^{1-q}}{1+q}\delta^{\theta(q-1)} 2^{\frac{3}{2}(1+q)}\delta^{\frac{sp-\theta sp+\theta p-p-\theta}{1+p}(1+q)}\\
    &+ \frac{2\sqrt{2}^{1-q}}{1+q}\delta^{\theta(q-1)}2^{1+q}2^{1+q}+1\\
    &= \sqrt{2} \delta^{\frac{sp-\theta sp+\theta p-p-\theta}{1+p}} + \frac{2^{3+q}}{1+q}\delta^{\frac{- 2\theta + 2\theta pq + sp-\theta sp-p + spq -\theta spq -pq}{1+p}} + \frac{2^{\frac{7+3q}{2}}}{1+q}\delta^{\theta(q-1)}+1.\\
\end{align*}

Next, we calculate an upper bound on the length of $\left\{\left(\frac{1}{t^p},\frac{1}{t^{pq}}\sin(\pi t)\right):1\leq t\leq M\right\}$:
\begin{align*}
    \mathcal{H}^1 \left(\left\{\left(\frac{1}{t^p},\frac{1}{t^{pq}}\sin(\pi t)\right):1\leq t\leq M\right\}\right) &= \sum_{i=1}^{M-1} \mathcal{H}^1 \left(\left\{\left(\frac{1}{t^p},\frac{1}{t^{pq}}\sin(\pi t)\right):i\leq t\leq i+1\right\}\right)\\
    &\leq \sum_{i=1}^{M-1} \left(\frac{2}{i^{pq}}+\frac{1}{i^p}-\frac{1}{(i+1)^p}\right)\\
    &= \sum_{i=1}^{M-1} \frac{2}{i^{pq}}+\frac{1}{1^p}-\frac{1}{M^p}\\
    &\leq \int_{x=0}^{M-1} \frac{2}{x^{pq}} dx + 1\\
    &= \frac{2}{1-pq}(M-1)^{1-pq} + 1\\
    &\leq \frac{2}{1-pq}\delta^{-\frac{s-\theta s +2\theta -1}{1+p}(1-pq)} + 1\\
    &= \frac{2}{1-pq}\delta^{\frac{spq-\theta spq +2\theta pq - pq -s+\theta s -2\theta +1}{1+p}} + 1.\\
\end{align*}
Thus, we can cover $\left\{\left(\frac{1}{t^p},\frac{1}{t^{pq}}\sin(\pi t)\right):1\leq t\leq M\right\}$ with at most $\frac{1}{\delta} \left(\frac{2}{1-pq}\delta^{\frac{spq-\theta spq +2\theta pq - pq -s+\theta s -2\theta +1}{1+p}} + 1\right)+1 = \frac{2}{1-pq}\delta^{\frac{spq-\theta spq +2\theta pq - pq -s+\theta s -2\theta -p}{1+p}}+\delta^{-1}+1$ sets of diameter $\delta$.

This gives us a cover $\mathcal{U}$ of $T_{p,q}$ such that $\delta\leq|U|\leq\delta^\theta$ for all $U\in\mathcal{U}$. Summing over all the sets in this cover, we have:
\begin{align*}
    \sum_{U\in\mathcal{U}}|U|^s &\leq \left(\sqrt{2} \delta^{\frac{sp-\theta sp+\theta p-p-\theta}{1+p}} + \frac{2^{3+q}}{1+q}\delta^{\frac{- 2\theta + 2\theta pq + sp-\theta sp-p + spq -\theta spq -pq}{1+p}} + \frac{2^{\frac{7+3q}{2}}}{1+q}\delta^{\theta(q-1)}+1 \right) \delta^{\theta s}\\
    &+ \left( \frac{2}{1-pq}\delta^{\frac{spq-\theta spq +2\theta pq - pq -s+\theta s -2\theta -p}{1+p}}+\delta^{-1} +1\right) \delta^s\\
    &= \sqrt{2} \delta^{\frac{sp-\theta sp+\theta p-p-\theta+\theta s + \theta sp}{1+p}} + \frac{2^{3+q}}{1+q}\delta^{\frac{- 2\theta + 2\theta pq + sp-\theta sp-p + spq -\theta spq -pq + \theta s + \theta sp}{1+p}}+ \frac{2^{\frac{7+3q}{2}}}{1+q}\delta^{\theta(s-1+q)}+\delta^{\theta s}\\
    &+ \frac{2}{1-pq}\delta^{\frac{spq-\theta spq +2\theta pq - pq -s+\theta s -2\theta -p +s+sp}{1+p}}+\delta^{s-1} + \delta^s\\
    &= \sqrt{2} \delta^{\frac{sp+\theta p-p-\theta+\theta s}{1+p}} + \frac{2^{3+q}}{1+q}\delta^{\frac{- 2\theta + 2\theta pq + sp-p + spq -\theta spq -pq +\theta s}{1+p}}+ \frac{2^{\frac{7+3q}{2}}}{1+q}\delta^{\theta(s-1+q)}+\delta^{\theta s}\\
    &+ \frac{2}{1-pq}\delta^{\frac{spq-\theta spq +2\theta pq - pq +\theta s -2\theta -p +sp}{1+p}}+\delta^{s-1} + \delta^s\\
    &= \sqrt{2} \delta^{\frac{(p+\theta)(s-1)+\theta p}{1+p}} + \frac{2^{3+q}}{1+q}\delta^{\frac{s[p(1+q)+\theta(1-pq)] -[p(1+q)+2\theta(1-pq)]}{1+p}} + \frac{2^{\frac{7+3q}{2}}}{1+q}\delta^{\theta(s-1+q)}+\delta^{\theta s}\\
    &+ \frac{2}{1-pq}\delta^{\frac{s[p(1+q)+\theta(1-pq)]-[p(1+q)+2\theta(1-pq)]}{1+p}}+\delta^{s-1} + \delta^s,\\
\end{align*}
which converges to $0$ as $\delta\to0^+$ under our assumption that $s>\frac{p(1+q)+2\theta(1-pq)}{p(1+q)+\theta(1-pq)}=1+\frac{\theta(1-pq)}{p(1+q)+\theta(1-pq)}\geq1$ (which ensures that all the powers of $\delta$ in the expression are positive). This proves that $\updim_\theta(T_{p,q})\leq s$. But this holds for any $s>\frac{p(1+q)+2\theta(1-pq)}{p(1+q)+\theta(1-pq)}$, so we conclude that $\updim_\theta(T_{p,q})\leq \frac{p(1+q)+2\theta(1-pq)}{p(1+q)+\theta(1-pq)}$.
\end{proof}

In the case considered in Theorem \ref{attenuated}, the envelope with which we attenuated the topologist's sine curve was defined by the function $g(x)=x^q$. One might then ask how the intermediate dimensions will behave if we use different enveloping functions. Based on the proof of Theorem \ref{attenuated} and taking $q\geq1/p$ and the limit $q\to0^+$, it is easy to see that if the enveloping function is eventually narrower than any polynomial function (as $x\to0^+$), then intermediate dimensions will be minimal (that is, 1); and if the enveloping function is eventually wider than any polynomial function (as $x\to0^+$), then the attenuation will not reduce the intermediate dimensions (relative to the topologist's sine curve).

This tells us that, for example, for all $p>0$, we have
\begin{equation*}
    \dim_\theta\left(\left\{\left(x,2^{-1/x}\sin\left(\pi x^{-1/p}\right)\right):x\in\left(0,1\right]\right\}\right) = 1
\end{equation*}
and
\begin{equation*}
    \dim_\theta\left(\left\{\left(x,\frac{1}{-\log(x)}\sin\left(\pi x^{-1/p}\right)\right):x\in\left(0,\frac{1}{3}\right]\right\}\right) = 1+\frac{\theta}{\theta+p}
\end{equation*}
for all $\theta\in[0,1]$.

\section*{Acknowledgements}
This note is based on the author's MSc Mathematics dissertation project, and the author is very grateful to his supervisor, Dr Jonathan Fraser, for his suggestions and feedback. The author would also like to thank Amlan Banaji for many helpful discussions.

\vfill

\footnotesize
{\parindent0pt
\textsc{Justin T. Tan\\ School of Mathematics and Statistics \\ The University of St Andrews \\ St Andrews, KY16 9SS, Scotland}\par\nopagebreak
\textit{Email:} \texttt{jtt1@st-andrews.ac.uk}
}

\end{document}